\DeclareMathOperator{\re}{\text{Re}}
\DeclareMathOperator{\im}{\text{Im}}
\newcommand{\INF}{{\infty}}
\newcommand{\tta}{\theta}
\newcommand{\OM}{\Omega}
\newcommand{\sph}{{{\mathbf S}^ 1}}
\newcommand{\del}{\partial}
\newcommand{\ol}{\overline}
\newcommand{\ds}{\displaystyle}
\newcommand{\Gam}{\varGamma}
\newcommand{\BR}{\mathbb{R}}
\newcommand{\fii}{{\varphi}}
\newcommand{\bu}{{\bf u}}
\newcommand{\bv}{{\bf v}}
\newcommand{\bg}{{\bf g}}
\newcommand{\bF}{{\bf F}}
\newcommand{\LL}{{\mathcal L}}
\newcommand{\B}{\mathcal{B}}
\newcommand{\HT}{\mathcal{H}}
\title{On the range characterization of the two dimensional attenuated Doppler transform
\thanks{Received by the editors 2014.}}
\author{Kamran Sadiq\thanks{Department of Mathematics, University
        of Central Florida, Orlando, FL, USA ({\tt ksadiq@knights.ucf.edu}).}\and
        Alexandru Tamasan\thanks{Department of Mathematics, University
        of Central Florida, Orlando, FL, USA ({\tt tamasan@math.ucf.edu}).}}
\begin{document}

\maketitle

\begin{abstract}
We characterize the range of the attenuated and non-attenuated $X$-ray transform of compactly supported vector fields in the plane. The characterization is in terms of a Hilbert transform associated with the $A$-analytic functions \`{a} la Bukhgeim. As an application we determine necessary and sufficient conditions for the attenuated Doppler and $X$-ray data to be mistaken for each other.
\end{abstract}
\begin{keywords}
attenuated X-ray transform, attenuated Doppler transform, A-analytic maps, Hilbert transform
\end{keywords}

\begin{AMS}
35J56,30E20
\end{AMS}

\pagestyle{myheadings} \thispagestyle{plain} \markboth{K. Sadiq and  A. Tamasan}{
Range characterization of the attenuated Doppler transform}

\section{Introduction} \label{S:intro}

Necessary and sufficient constraints on the range of the (non-attenuated) Radon transform of zero order tensors in the Euclidean space have been known since the works in \cite{gelfandGraev}, \cite{helgason}, and \cite{ludwig}. In the case of the attenuated Radon transform with constant attenuation some range conditions can be inferred from \cite{kuchmentLvin}, \cite{aguilarKuchment} and \cite{aguilarEhrenpreisKuchment}. For a varying attenuation, range constraints for the two dimensional $X$-ray transform were given in \cite{novikov02} based on the inversion method in \cite{novikov01}, see also \cite{bal}. A separate method to invert the two dimensional attenuated $X$-ray transform based on the theory of $A$-analytic functions was originally developed in \cite{ABK}. In \cite{sadiqtamasan01} the authors introduce a Hilbert transform corresponding to $A$-analytic maps, and use it to characterize the range of the attenuated Radon transform of compactly supported functions.

The problem of inversion of the $X$-ray transform of higher order tensor fields have been formulated in \cite{vladimirBook} in the geometric setting of Riemannian manifolds with boundary; see \cite{SSLP} for the Euclidean setting. Coming from the practical procedure of acquiring data, the $X$-ray transform of vector fields is also known as the Doppler transform. Various partial results (e.g., \cite{sharafutdinov}, \cite{sharafutdinovSkokanUhlmann}) culminated with the inversion formulas for recovering the solenoidal part of one-tensors on simple Riemannian surfaces with boundary \cite{pestovUhlmann04}; see also \cite{pfitzenreiterSchuster}. Injectivity in the attenuated case for both 0- and 1-tensors is much more recent \cite{saloUhlmann11}; see also \cite{holmanStefanov} for a more general weighted transform. Inversion formulas to tensors of higher orders have been found in \cite{kazantsevBukhgeimJr06} for the Euclidean case, and \cite{paternainSaloUhlmann13} for the Riemannian case. However, these works do not address range characterization.

The first range characterizations of the (non-attenuated) $X$-ray transform is given in \cite{pestovUhlmann04} for both 0- and 1-tensors supported
on simple Riemannian surfaces with boundary. This characterization is given in terms of the scattering relation.

We consider here the problem  of the range characterization of both attenuated  and non-attenuated Doppler transform in a strictly convex bounded
domain in the Euclidian plane.
Our approach uses the Hilbert transform for $A$-analytic maps in \cite{sadiqtamasan01} and relies on new identities enjoyed by such maps,
see Lemma \ref{UconjLemma}. The characterization concerns the Fourier modes (in the angular variables) of the data and can be interpreted as a
characterization of the scattering relation in \cite{pestovUhlmann04} for the Euclidean case.
Of particular interest in the non-attenuating case, we found that the even and odd negative Fourier modes play a different role. More precisely, the odd negative index modes are doubly constrained, and that the zero-th order Fourier mode \emph{is independent} of the other modes; see Theorem \ref{NADopplerT}. This is a direct equivalence to the non-uniqueness in the inversion (up to a gradient of a compactly supported map).
In the positive attenuation case, the Doppler transform is uniquely invertible as shown in  \cite{kazantsevBukhgeimJr06}, see also \cite{tamasan07}. In such a case the zero-th Fourier mode is uniquely determined by the negative modes of the boundary data, see Theorem \ref{ADopplerT}.

The method used in the characterization will explain when (and only then) the attenuated X-ray and Doppler data can be confounded for each other,
see Section \ref{xraydopplermistaken}. Practical applications may include noise reduction and missing data completion in medical imaging methods
such as Single Photon, Positron Emission Computed Tomography, or Doppler Tomography \cite{nattererWubbeling}.

Let $\OM \subset \BR^2$ be a bounded strictly convex domain in the plane and $\Gam$ be its boundary. The unit sphere is denoted by $\sph$.
For any $(x,\theta)\in\ol\OM\times \sph$, let $\tau_\pm(x,\theta)$ denote the distance from $x$ in the $\pm\theta$ direction to the boundary, and distinguish the endpoints $x^\pm_\theta\in\Gam$ of the chord in the direction of $\theta$ passing through $x$ by
\begin{align}\label{xthetapm}
x^\pm_\theta:=x\pm\tau_\pm(x,\theta)\theta,
\end{align}as in Figure \ref{fig:1} below. Note that $\tau(x,\tta)=\tau_+(x,\tta)+\tau_-(x,\tta)$ is the length of the cord.

\begin{figure}[ht]
\centering
\begin{tikzpicture}[scale=1.5,cap=round,>=latex]

 \draw[thick] (0cm,0cm) circle(1cm);
 \draw[gray] (0cm,0cm) -- (45:1cm);
 \filldraw[black] (45:1cm) circle(1.2pt);
  \coordinate[label=right:$x_{\theta}^{+}$] (x_{+}) at (45:1.01cm);

 \draw[->] (45:1.2cm) -- (45:1.6cm);
 \coordinate[label=left:$\theta$] (theta1) at (45:1.4cm);

 \filldraw[black] (45:0.2cm) circle(1.2pt);
  \coordinate[label=right:$x$] (x) at(45:0.2cm);
  \coordinate[label=above:$\OM$] (OM) at(120:0.6cm);

  \draw[gray] (0cm,0cm) -- (225:1cm);
  \filldraw[black] (225:1cm) circle(1.2pt);

  \coordinate[label=below:$x_{\theta}^{-}$] (x_{-}) at (225:1cm);

  \draw[->] (225:1.6cm) -- (225:1.2cm);
  \coordinate[label=left:$\theta$] (theta2) at (225:1.2cm);

  \tikzset{
    position label/.style={
       above = 3pt,
       text height = 1.5ex,
       text depth = 1ex
    },
   brace/.style={
     decoration={brace,mirror},
     decorate
   }
}
% Length of the chord from x to x+
\draw [brace,decoration={raise=0.5ex}] [brace]  (x_{+}.north) -- (x.north) node [position label, pos=0.6, rotate = 45,scale=0.8] {$\tau_{+}(x,\tta)$};
 \tikzset{
    position label/.style={
       below = 3pt,
       text height = 1.5ex,
       text depth = 1ex
    },
   brace/.style={
     decoration={brace,mirror},
     decorate
   }
}

 \draw [brace,decoration={raise=0.5ex}] (x_{-}.north) -- (x.north) node [position label, pos=0.55, rotate = 45, scale=0.8] {$\tau_{-}(x,\tta)$};
\end{tikzpicture}
\caption{Definition of $\tau_\pm(x,\tta)$ } \label{fig:1}
\end{figure}
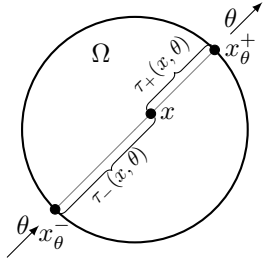

We consider a real valued function $a \in C^1_0(\ol \OM)$. For each $x \in \OM$ and $\tta = (\cos \fii , \sin \fii) \in \sph$ the {\em divergence beam transform} of $a$ is
\begin{align}\label{divbeam}
Da(x,\tta): =\int_{0}^{\tau(x,\tta)} a(x+t\tta)dt.
\end{align}

The {attenuated $X$-ray transform} (with attenuation $a$) of some function $f\in L^1(\OM)$   is given by
\begin{align}\label{AttRT}
\int_{-\tau_-(x,\tta)}^{\tau_+(x,\tta)} f(x+t\theta)e^{-Da(x+t\theta,\theta)}dt,
\end{align} and the attenuated Doppler Transform of some vector field $\bF\in L^1(\OM;\BR^2)$ is given by
\begin{align}\label{DopplerT}
\int_{-\tau_-(x,\tta)}^{\tau_+(x,\tta)} ( \tta \cdot \bF)(x+t\tta)e^{-Da(x+t\theta,\theta)}dt .
\end{align}

Both transforms are functions on the tangent bundle of the circle, however we will describe the constraints in terms of a function $g$ on $\Gam\times\sph$ as follows:
\begin{definition}
(i) We say that $g$ is an {\em attenuated $X$-ray transform } of $f$ with attenuation $a$, if
\begin{align}\label{Radon_definition}
g(x^+_\theta,\theta)-\left[e^{-Da} g\right](x^-_\theta,\theta)= \int_{\tau_-(x,\theta)}^{\tau_+(x,\theta)} f(x+t\theta)e^{-Da(x+t\theta,\theta)}dt,
\end{align}for  $(x,\theta)\in\ol\OM\times \sph$. We use the notation $g\in R_af$, and $g\in Rf$ if $a \equiv 0$.

(ii) We say that $g$ is
 an {\em attenuated Doppler transform }of $\bF$ with attenuation $a$, if
\begin{align}\label{Doppler_definition}
g(x^+_\theta,\theta)-\left[e^{-Da} g\right](x^-_\theta,\theta)= \int_{\tau_-(x,\theta)}^{\tau_+(x,\theta)} ( \tta \cdot \bF)(x+t\tta)e^{-Da(x+t\theta,\theta)}dt ,
\end{align}for  $(x,\theta)\in\ol\OM\times \sph$. We use the notation $g\in D_a\bF$, and $g\in D\bF$ if $a \equiv 0$.
\end{definition}

These definitions are motivated by the connection with the transport model as follows. If $v$ is a solution to
\begin{equation} \label{AttenRadonTEq}
    \tta\cdot\nabla v(x,\tta) +a(x) v(x,\theta) = f(x), \quad  (x,\tta)\in \OM \times \sph,
\end{equation}
then $g:=v|_{\Gamma\times\sph}$ satisfies \eqref{Radon_definition}, and similarly, if $u$ is a solution to
\begin{equation} \label{DopplerTransEq}
    \tta\cdot\nabla u(x,\tta) +a(x) u(x,\theta) =  \tta \cdot \bF (x), \quad  (x,\tta)\in \OM \times \sph,
\end{equation}
then $g:=u|_{\Gamma\times\sph}$ satisfies \eqref{Doppler_definition}. Let $$\Gamma_\pm =\{(x,\tta)\in \Gamma\times \sph: \pm\tta\cdot \nu(x)>0\},$$
where $\nu(x)$ is the outer normal. It is clear that specifying $v$, or $u$ on $\Gamma_-$ well defines a unique solution to the transport equations \eqref{AttenRadonTEq}, respectively \eqref{Doppler_definition}. In particular, those traces $g$ vanishing on ${\Gamma_-}$, make $g|_{\Gamma_+}$ coincide with the classical definitions \eqref{AttRT}, respectively \eqref{DopplerT}.

\section{Basic properties of $A$-analytic maps}
In this section we briefly introduce the properties of $A$-analytic maps needed later, and introduce notation.
The presentation follows mainly from \cite{sadiqtamasan01}. Only the new results are proven.

For $z=x_1+ix_2$, we consider the Cauchy-Riemann operators
\begin{align} \label{CauchyRiemannOp}
\ol{\del} = \left( \del_{x_{1}}+i \del_{x_{2}} \right) /2 ,\quad \del = \left( \del_{x_{1}}- i \del_{x_{2}} \right) /2
\end{align}

Let $l_\INF (,l_1)$ be the space of bounded (, respectively summable) sequences,
$\mathcal{L}:l_\infty\to l_\infty$ be the left shift
\begin{align*}
\mathcal{L}  \langle u_{-1}, u_{-2}, ... \rangle =  \langle u_{-2}, u_{-3}, u_{-4}, ... \rangle,
\end{align*}and $\mathcal{L}^{k}=\underbrace{\mathcal{L}\circ \cdots \circ \mathcal{L}}_{k}$ be its $k$-th composition;
we will need only $k =1,2$.
\begin{definition}
A sequence valued map
$$z\mapsto  \bu(z): = \langle u_{-1}(z),u_{-2}(z),u_{-3}(z)... \rangle$$
is called {\em $\mathcal{L}^k$-analytic}, $k=1,2$, if $\bu\in C(\ol\OM;l_\INF)\cap C^1(\OM;l_\INF)$ and
\begin{equation}\label{Aanalytic}
\ol{\del} \bu (z) + \mathcal{L}^k \del \bu (z) = 0,\quad z\in\OM.
\end{equation}
\end{definition}

For $0<\alpha<1$ and $k=1,2$, we recall the Banach spaces in \cite{sadiqtamasan01}:
\begin{equation} \label{lGamdefn}
 l^{1,k}_{\INF}(\Gam): = \left \{ \bu=  \langle u_{-1}, u_{-2}, ... \rangle  : \sup_{\zeta\in \Gam}\sum_{j=1}^{\INF}  j^{k} \lvert u_{-j}(\zeta) \rvert < \INF \right \},
\end{equation}
\begin{equation} \label{CepsGamdefn}
 C^{\alpha}(\Gam ; l_1) := \left \{ \bu:
\sup_{\xi\in \Gam} \lVert \bu(\xi)\rVert_{\ds l_{1}} + \underset{{\substack{
            \xi,\eta \in \Gam \\
            \xi\neq \eta } }}{\sup}
 \frac{\lVert \bu(\xi) - \bu(\eta)\rVert_{\ds l_{1}}}{|\xi - \eta|^{ \alpha}} < \INF \right \}.
\end{equation}
By replacing $\Gam$ with $\overline\OM$ and $l_{1}$ with $l_{\INF}$ in \eqref{CepsGamdefn} we similarly define  $C^{\alpha}(\overline\OM ; l_1)$, respectively, $C^{\alpha}(\ol\OM ; l_{\INF})$.

At the heart of the theory of $A$-analytic maps lies a Cauchy-like integral formula first introduced by Bukhgeim in \cite{bukhgeim_book}. The explicit variant \eqref{BukhgeimCauchyFormula} appeared first in Finch \cite{finch}.
The Bukhgeim-Cauchy integral formula  below is restated in terms of $\mathcal{L}$-analytic maps as opposed to $\mathcal{L}^2$-analytic as in \cite{sadiqtamasan01}.

\begin{theorem}\cite[Theorem 3.1]{sadiqtamasan01}\label{BukhgeimCauchyThm}
For some $\bg=\langle g_{-1}, g_{-2},g_{-3}...\rangle  \in l^{1,1}_{\INF}(\Gam)\cap C^\alpha(\Gam;l_1)$ define the Bukhgeim-Cauchy operator $\B $ acting on $\bg$, $$\OM\ni z\mapsto \langle (\B \bg)_{-1}(z) ,(\B \bg)_{-2}(z),(\B \bg)_{-3}(z),...\rangle,$$ by
\begin{align}
(\B \bg)_{-n}(z) &:= \frac{1}{2\pi i}\sum_{j=0}^{\infty} \int_{\Gam}
\frac{ g_{-n-j}(\zeta)\ol{(\zeta-z)}^{j}}{(\zeta-z)^{j+1}}d\zeta
\nonumber \\
 \label{BukhgeimCauchyFormula}
 & \qquad - \frac{1}{2\pi i}\sum_{j=1}^{\INF} \int_{\Gam}
\frac{ g_{-n-j}(\zeta)\ol{(\zeta-z)}^{j-1}}{(\zeta-z)^{j}}d\ol{\zeta}, \; n=1,2,3,...
\end{align}
Then $\B \bg\in C^{1,\alpha}(\OM;l_\infty)\cap C(\ol \OM;l_\infty)$ is $\mathcal{L}$-analytic.
\end{theorem}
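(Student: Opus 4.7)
The plan is to treat each component $(\B\bg)_{-n}$ as a normally convergent series of Cauchy-type integrals, establish $\mathcal{L}$-analyticity by termwise differentiation together with an index-shift cancellation, and inherit the interior $C^{1,\alpha}$-regularity and continuous extension to $\ol\OM$ from classical singular-integral estimates.

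First I would establish pointwise convergence and an $l_{\INF}$-bound uniform in $n$. Since
\[
\left|\frac{\overline{(\zeta-z)}^{j}}{(\zeta-z)^{j+1}}\right|=\left|\frac{\overline{(\zeta-z)}^{j-1}}{(\zeta-z)^{j}}\right|=\frac{1}{|\zeta-z|},
\]
every integrand is bounded by $|g_{-n-j}(\zeta)|/|\zeta-z|$. For $z$ in a compact $K\subset\OM$ with $d_K:=\mathrm{dist}(K,\Gam)>0$, this yields
\[
\sup_{z\in K}\bigl|(\B\bg)_{-n}(z)\bigr|\le \frac{|\Gam|}{\pi d_K}\sum_{j=0}^{\INF}\|g_{-n-j}\|_{\INF}.
\]
The hypothesis $\bg\in l^{1,1}_{\INF}(\Gam)$ supplies the $n$-independent majorant $\sum_{k=1}^{\INF}\|g_{-k}\|_{\INF}\le \sup_{\zeta\in\Gam}\sum_{k}k|g_{-k}(\zeta)|<\INF$, so $\B\bg$ takes values in $l_{\INF}$ uniformly on compact subsets of $\OM$ and the series converges normally there.

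Next, termwise differentiation (justified by the same estimates applied to the differentiated series) combined with
\[
\bar\del_z\frac{\overline{(\zeta-z)}^{j}}{(\zeta-z)^{j+1}}=-\frac{j\,\overline{(\zeta-z)}^{j-1}}{(\zeta-z)^{j+1}},\qquad \del_z\frac{\overline{(\zeta-z)}^{j-1}}{(\zeta-z)^{j}}=\frac{j\,\overline{(\zeta-z)}^{j-1}}{(\zeta-z)^{j+1}},
\]
and the companion pair for the $d\ol\zeta$-kernel, permits me to assemble $\bar\del(\B\bg)_{-n}+\del(\B\bg)_{-n-1}$. After the single index shift $k=j+1$ in the $\del(\B\bg)_{-n-1}$ terms, the $d\zeta$-sums cancel pairwise against those from $\bar\del(\B\bg)_{-n}$, and so do the $d\ol\zeta$-sums, giving the system $\bar\del \bu+\mathcal{L}\del \bu=\bO$ on $\OM$.

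The remaining and most delicate step is continuous extension to $\ol\OM$ together with the interior $C^{1,\alpha}$-bound. Each term is a Cauchy-type integral whose principal singularity is $1/(\zeta-z)$, modulated by the unit-modulus oscillatory factor $[\overline{(\zeta-z)}/(\zeta-z)]^{j}$; the classical Plemelj--Privalov estimates give for each $j$ a $C^\alpha(\ol\OM)$-extension with norm bounded by $C_j\|g_{-n-j}\|_{C^\alpha(\Gam)}$. The hard part will be summing these estimates uniformly in $n$: the naive pointwise majorant $\sum\|g_{-n-j}\|_{\INF}$ is insufficient near $\Gam$, and the Hölder norms of the oscillatory factor grow polynomially with $j$. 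I would exploit the full $C^\alpha(\Gam;l_1)$-hypothesis together with the weight $j$ in $l^{1,1}_{\INF}(\Gam)$ to control $\sum_j C_j\|g_{-n-j}\|_{C^\alpha(\Gam)}$ uniformly, following the argument of \cite[Thm.~3.1]{sadiqtamasan01}. The only adjustment from that reference is that $\mathcal{L}^{2}$ is replaced by $\mathcal{L}$, which merely relabels indices in both the convergence bookkeeping and the cancellation computed above.
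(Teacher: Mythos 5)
The paper offers no proof of this theorem: it is quoted (up to the relabelling from $\mathcal{L}^2$- to $\mathcal{L}$-analytic) from \cite[Theorem 3.1]{sadiqtamasan01}, so there is no in-paper argument to compare against, and your outline is the standard one from that reference. The kernel derivatives you record are correct, and the index shift $k=j+1$ does produce the exact pairwise cancellation giving $\ol{\del}(\B\bg)_{-n}+\del(\B\bg)_{-n-1}=0$; you also correctly isolate the continuous extension to $\ol\OM$ and the interior $C^{1,\alpha}$ bound as the steps that genuinely need the weight $j$ in $l^{1,1}_{\INF}(\Gam)$ and the $C^\alpha(\Gam;l_1)$ hypothesis, deferring them (as the paper itself does) to the cited source. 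One bookkeeping step as written would fail, though: your interior majorant $\sum_{j}\|g_{-n-j}\|_{L^\INF(\Gam)}$ is not controlled by the hypotheses, because both $l^{1,1}_{\INF}(\Gam)$ and $C^\alpha(\Gam;l_1)$ place the supremum over $\zeta$ \emph{outside} the sum over $j$, and the inequality $\sum_k\sup_\zeta|g_{-k}(\zeta)|\le\sup_\zeta\sum_k k|g_{-k}(\zeta)|$ is false in general (take $g_{-j}=j^{-1}\phi_j$ with $\phi_j$ unit bumps supported on disjoint arcs of length $\sim j^{-1/\alpha}$: this lies in both spaces, yet $\sum_j\sup_\zeta|g_{-j}(\zeta)|=\sum_j j^{-1}=\INF$). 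The fix is immediate: interchange the sum and the contour integral first, which yields the finite majorant $\frac{|\Gam|}{\pi d_K}\sup_{\zeta\in\Gam}\|\bg(\zeta)\|_{l_1}$ and also legitimizes the termwise differentiation after one extra factor of $j$ is absorbed by the $l^{1,1}_{\INF}$ weight. With that correction the proposal is sound and follows the same route as the cited proof.
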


For our purposes further regularity in $\B\bg$ will be required. Such smoothness is obtained by increasing the assumptions on the rate of decay of the terms in $\bg$ as explicit below. Let us recall the Banach space $Y_\alpha$ in \cite{sadiqtamasan01}:
\begin{equation} \label{YGamdefn}
 Y_{\alpha} = \left \{ \bg\in l^{1,2}_{\INF}(\Gamma) :  \underset{{\substack{
            \xi, \mu  \in \Gam \\
            \xi \neq \mu } }}{\sup} \sum_{j=1}^{\INF} j\frac{\lvert g_{-j}(\xi) - g_{-j}(\mu) \rvert}{|\xi - \mu|^\alpha} < \INF \right \}.
\end{equation}

\begin{proposition}\label{extraRegularity}
If $\bg\in Y_{\alpha}$, $\alpha>1/2$, then \begin{align}
\B \bg\in C^{1,\alpha}(\OM;l_1)\cap C(\ol \OM;l_1)\cap C^2(\OM;l_\infty).\end{align}
\end{proposition}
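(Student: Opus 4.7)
The plan is to extend the analysis of Theorem \ref{BukhgeimCauchyThm} by bringing derivatives inside the series defining $\B\bg$ and summing the resulting bounds against the stronger weights built into $Y_\alpha$. Introduce
\begin{equation*}
S_j^{(n)}(z) := \frac{1}{2\pi i}\int_\Gam g_{-n-j}(\zeta)\frac{\ol{(\zeta-z)}^j}{(\zeta-z)^{j+1}}d\zeta,\quad
\tilde S_j^{(n)}(z) := \frac{1}{2\pi i}\int_\Gam g_{-n-j}(\zeta)\frac{\ol{(\zeta-z)}^{j-1}}{(\zeta-z)^j}d\ol\zeta,
\end{equation*}
so that $(\B\bg)_{-n}=\sum_{j=0}^\infty S_j^{(n)}-\sum_{j=1}^\infty \tilde S_j^{(n)}$; the modulus of each kernel equals $1/|\zeta-z|$, and any $k$-fold Wirtinger derivative gains a factor of order $(j+1)\cdots(j+k)/|\zeta-z|^k$.

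For the interior $C^2(\OM;l_\infty)$ claim I would fix a compact $K\subset\OM$ with $d:=\mathrm{dist}(K,\Gam)>0$. Any second-order Wirtinger derivative of the two kernels has magnitude at most $(j+1)(j+2)/|\zeta-z|^3\leq (j+1)(j+2)/d^3$. Interchanging sum and integral by Fubini, and reindexing by $m=n+j$ (using $(m-n+1)(m-n+2)\leq (m+1)(m+2)$),
\begin{equation*}
\sup_{n\in\BN}\sup_{z\in K}|\del^2(\B\bg)_{-n}(z)|\leq \frac{C|\Gam|}{d^3}\sup_{\zeta\in\Gam}\sum_{m=1}^\infty (m+1)(m+2)|g_{-m}(\zeta)|\leq \frac{C'|\Gam|}{d^3}\|\bg\|_{l^{1,2}_\infty(\Gam)},
\end{equation*}
and the same estimate governs $\ol\del\del$ and $\ol\del^{\,2}$. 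Continuity of the second derivatives then follows from the dominated convergence theorem.

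For $C(\ol\OM;l_1)$ and $C^{1,\alpha}(\OM;l_1)$ I would refine the Plemelj--Privalov estimates underlying Theorem \ref{BukhgeimCauchyThm}. Since $|(\ol{\zeta-z}/(\zeta-z))^j|=1$, each $S_j^{(n)}$ has the same singular structure as a classical Cauchy integral, and a careful tracking of the $j$-dependence yields estimates schematically of the form
\begin{equation*}
\|S_j^{(n)}\|_{C(\ol\OM)}\leq C_\alpha (j+1)\|g_{-n-j}\|_{C^\alpha(\Gam)},\quad \|S_j^{(n)}\|_{C^{1,\alpha}(K)}\leq C(K,\alpha)(j+1)^2\|g_{-n-j}\|_{C^\alpha(\Gam)},
\end{equation*}
and analogously for $\tilde S_j^{(n)}$. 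Summing first in $j$ and then in $n$ with the change $m=n+j$, the two defining $Y_\alpha$ seminorms --- $\sup_\zeta \sum_m m^2 |g_{-m}(\zeta)|$ controlling the pointwise part and $\sup_{\xi\neq \mu}\sum_m m|g_{-m}(\xi)-g_{-m}(\mu)|/|\xi-\mu|^\alpha$ controlling the Hölder part --- are exactly what is needed to absorb the polynomial-in-$j$ factors. The Weierstrass $M$-test then delivers uniform convergence of the series, and dominated convergence upgrades this to continuity in the $l_1$ norm.

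The main technical obstacle is producing the sharp Plemelj--Privalov estimates for the higher-order Cauchy--Bukhgeim kernels with explicit polynomial control in $j$, and verifying that the polynomial growth is absorbed by the $Y_\alpha$ weights; this is where the hypothesis $\alpha>1/2$ is used, both to guarantee that the Hölder-type summations on $\Gam$ remain finite after multiplication by those polynomial factors and to keep the boundary limits continuous.
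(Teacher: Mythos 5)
Your treatment of the interior $C^2(\OM;l_\infty)$ claim is essentially the paper's argument: fix a compact set at distance $d$ from $\Gam$, bound the second derivatives of the kernels by $O(j^2/d^3)$ there, reindex $m=n+j$, and absorb the weight into $\|\bg\|_{l^{1,2}_\infty(\Gam)}$. That part is correct.

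The gap is in the $C^{1,\alpha}(\OM;l_1)\cap C(\ol\OM;l_1)$ part, which the paper does not reprove at all --- it simply invokes \cite[Corollary 4.1]{sadiqtamasan01}. Your from-scratch sketch has two concrete problems. First, you estimate $S_j^{(n)}$ and $\tilde S_j^{(n)}$ separately, but the boundary behavior of the Bukhgeim--Cauchy formula rests on an exact cancellation between the $d\zeta$ and $d\ol\zeta$ kernels: writing $\zeta-z=re^{i\phi}$ one has $\frac{\ol{(\zeta-z)}^{j}}{(\zeta-z)^{j+1}}\,d\zeta-\frac{\ol{(\zeta-z)}^{j-1}}{(\zeta-z)^{j}}\,d\ol\zeta=2ie^{-2ij\phi}\,d\phi$, so the non-integrable $dr/r$ contributions cancel and only the uniformly bounded (for convex $\OM$, monotone) measure $d\phi$ survives. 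Treated individually, the pieces $g_{-n-j}(\zeta_0)\int_\Gam e^{-2ij\phi}\,dr/r$ are not uniformly controlled as $z\to\Gam$, so the claimed bound $\|S_j^{(n)}\|_{C(\ol\OM)}\le C_\alpha(j+1)\|g_{-n-j}\|_{C^\alpha(\Gam)}$ is not available term by term; any correct argument must keep the two sums paired. Second, even granting such bounds, they do not close against $Y_\alpha$: summing $(j+1)\|g_{-n-j}\|_{C^\alpha(\Gam)}$ over $j$ and then over $n$ for the $l_1$ norm requires $\sum_m m^2\|g_{-m}\|_{C^\alpha(\Gam)}<\infty$, i.e.\ weight $m^2$ on the H\"older seminorms, whereas $Y_\alpha$ weights those seminorms only by $m$ (the $m^2$ weight sits on the sup norms); your $(j+1)^2$ bound for the $C^{1,\alpha}$ part is worse still, demanding weight $m^3$. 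The estimates behind \cite[Corollary 4.1]{sadiqtamasan01} are anisotropic in exactly the way $Y_\alpha$ is built --- higher polynomial weight against $\sup|g_{-j}|$, lower weight against the H\"older seminorms, with $\alpha>1/2$ entering through summability of the resulting $j^{1-2\alpha}$-type factors --- and schematic bounds that lump both seminorms under a single $\|\cdot\|_{C^\alpha}$ with one polynomial factor cannot reproduce this. You correctly identify these estimates as the main obstacle, but in the specific form you propose they would fail.
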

\begin{proof}
Each individual integral in \eqref{BukhgeimCauchyFormula} defines a $C^\infty(\OM)$ function. However, each differentiation essentially brings down a factor of $j$ and convergence of the resulting series need to be checked. When $\bg\in Y_\alpha$ we appeal to \cite[Corollary 4.1]{sadiqtamasan01} to conclude $\B \bg\in C^{1,\alpha}(\OM;l_1)\cap C(\ol \OM;l_1)$.

We show next that $\B \bg\in C^2(\OM;l_\infty)$. Since  we only claim  the regularity inside $\OM$, it suffices to carry the estimates in a neighborhood of a fixed point $z_0$ bounded away from the boundary.
For $z_0\in\OM$ arbitrarily fixed, let $d=\inf\{|z_0-\zeta|:\;\zeta\in\Gam\}>0$. When taking two arbitrary derivatives (formally denoted $\nabla^2$) in \eqref{BukhgeimCauchyFormula}, we can easily estimate uniformly in $\{z:\;|z-z_0|< d/2\}$:
\begin{align*}
|\nabla^2 (\B \bg)_{-n}(z)|\leq M\sup_{\zeta\in\Gam}\sum_{j=0}^\infty|g_{-n-j}(\zeta)|j^2,
\end{align*}for some $M>0$ depending only on the perimeter $|\Gam|$ of the boundary and the distance $d$, but independent of $n$. In particular,
for each $n\geq 1$,
\begin{align*}
|\nabla^2 (\B \bg)_{-n}(z)|\leq M\sup_{\zeta\in\Gam}\sum_{j=0}^\infty|g_{-n-j}(\zeta)|(n+j)^2\leq M\|\bg\|_{l^{1,2}_\infty(\Gam)}
\end{align*}

\end{proof}

The Hilbert transform defined below also accounts for some index relabelling due to the difference between $\mathcal{L}$-analytic and $\mathcal{L}^2$-analytic.
\begin{definition}
For $\bg=\langle g_{-1},g_{-2},g_{-3}...\rangle\in l^{1,1}_{\INF}(\Gam)\cap C^\alpha(\Gam;l_1)$, we define the Hilbert transform $\HT\bg$  componentwise for $n \geq 1$ by
\begin{align}\label{hilbertT}
(\HT\bg)_{-n}(\xi)&=\frac{1}{\pi} \int_{\Gam } \frac{g_{-n}(\zeta)}{\zeta - \xi} d\zeta\nonumber\\
&+\frac{1}{\pi} \int_{\Gam } \left \{ \frac{d\zeta}{\zeta-\xi}-\frac{d \ol{\zeta}}{\ol{\zeta-\xi}} \right \} \sum_{j=1}^{\infty}  g_{-n-j}(\zeta)
\left( \frac{\ol{\zeta-\xi}}{\zeta-\xi} \right) ^{j},\; \xi\in\Gam.
\end{align}
\end{definition}

The following result justifies the name of the transform $\HT$. For its proof we refer to \cite[Theorem 3.2]{sadiqtamasan01}.

\begin{theorem}\label{NecSuf}
For $0<\alpha<1$, let $\bg \in l^{1,1}_{\INF}(\Gamma)\cap C^\alpha(\Gamma;l_1)$.
For $\bg$ to be boundary value of an $\mathcal{L}$-analytic function it is necessary and sufficient that
\begin{equation} \label{NecSufEq}
 (I+i\HT) \bg=0,
 \end{equation} where $\HT$ is as in \eqref{hilbertT}.
\end{theorem}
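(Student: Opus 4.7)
The plan is to reduce the theorem to the single trace identity
\begin{equation*}
\lim_{\OM\ni z \to \xi}(\B\bg)_{-n}(z) = \tfrac{1}{2}g_{-n}(\xi)-\tfrac{i}{2}(\HT\bg)_{-n}(\xi), \qquad \xi\in\Gamma,\ n\geq 1,
\end{equation*}
together with the reproducing formula $\bu=\B(\bu|_\Gamma)$ for any $\mathcal{L}$-analytic $\bu\in C(\ol\OM;l_\infty)\cap C^1(\OM;l_\infty)$. The reproducing formula is the $\mathcal{L}$-analytic analog of Cauchy's integral formula and is obtained by applying Stokes' theorem to each component, using \eqref{Aanalytic} to replace the $\ol\partial$-integrand by $-\mathcal{L}\partial \bu$ and iterating. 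Granted these two facts, both directions are short. For sufficiency, $(I+i\HT)\bg=0$ gives $\HT\bg=i\bg$; setting $\bu:=\B\bg$, Theorem \ref{BukhgeimCauchyThm} provides $\mathcal{L}$-analyticity and the trace identity yields $\bu|_\Gamma=\tfrac{1}{2}\bg-\tfrac{i}{2}(i\bg)=\bg$. For necessity, an $\mathcal{L}$-analytic $\bu$ with boundary trace $\bg$ satisfies $\bu=\B\bg$, so matching traces through the identity forces $\bg=\tfrac{1}{2}\bg-\tfrac{i}{2}\HT\bg$, i.e. $(I+i\HT)\bg=0$.

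The central technical step is the trace identity itself, which I would prove term by term in \eqref{BukhgeimCauchyFormula}. The $j=0$ summand of the first sum is the classical Cauchy integral $\frac{1}{2\pi i}\int_\Gamma g_{-n}(\zeta)/(\zeta-z)\,d\zeta$; its inner trace by Sokhotski-Plemelj equals $\tfrac{1}{2}g_{-n}(\xi)+\tfrac{1}{2\pi i}\mathrm{PV}\!\int_\Gamma g_{-n}(\zeta)/(\zeta-\xi)\,d\zeta$, which accounts for the jump $\tfrac{1}{2}g_{-n}(\xi)$ and, after writing $\tfrac{1}{2\pi i}=-\tfrac{i}{2}\cdot\tfrac{1}{\pi}$, for the first integral in $(\HT\bg)_{-n}$ with coefficient $-i/2$. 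For $j\geq 1$ in either sum, a direct residue computation on the circle $|\zeta-z|=r$ gives
\begin{equation*}
\frac{1}{2\pi i}\oint\frac{\overline{(\zeta-z)}^j}{(\zeta-z)^{j+1}}\,d\zeta = 0 = \frac{1}{2\pi i}\oint\frac{\overline{(\zeta-z)}^{j-1}}{(\zeta-z)^j}\,d\overline\zeta,
\end{equation*}
so these modified kernels produce no Plemelj jump and their trace is obtained by direct substitution $z=\xi$ in the principal-value sense. After shifting the summation index by one in the second sum of \eqref{BukhgeimCauchyFormula} and pairing with the $j\geq 1$ portion of the first sum, the surviving principal-value integrals reassemble into the bracket $d\zeta/(\zeta-\xi)-d\overline\zeta/\overline{\zeta-\xi}$ multiplied by $(\overline{\zeta-\xi}/(\zeta-\xi))^j$, reproducing the tail of $(\HT\bg)_{-n}$ with the same coefficient $-i/2$.

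The main obstacle is rigorously exchanging the $z\to\xi$ limit with the two infinite sums and with the passage from ordinary to principal-value integrals. The hypothesis $\bg\in l^{1,1}_\INF(\Gamma)\cap C^\alpha(\Gamma;l_1)$ is tuned exactly for this: the summability of $\sum j|g_{-j}|$ dominates every kernel in \eqref{BukhgeimCauchyFormula} uniformly for $z$ in a neighborhood of $\Gamma$, legitimizing termwise passage to the limit, while the Hölder continuity in $l_1$ secures existence of each resulting principal-value component via Privalov-type estimates analogous to those used in \cite{sadiqtamasan01}. A secondary technical point is uniform non-tangential convergence across the infinite family of modified kernels indexed by $j\geq 1$; this reduces to the standard device of replacing a short subarc of $\Gamma$ near $\xi$ by a tangent circular arc and invoking the vanishing residues computed above.
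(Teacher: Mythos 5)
Your argument is correct and follows essentially the same route as the proof the paper defers to in \cite[Theorem 3.2]{sadiqtamasan01}: a Sokhotski--Plemelj jump analysis of the Bukhgeim--Cauchy kernels (only the $j=0$ term jumps, the modified kernels having vanishing residue) combined with the reproducing property $\bu=\B(\bu|_\Gamma)$ of $\mathcal{L}$-analytic maps, with the stated hypotheses on $\bg$ used exactly where you place them. The constants check out, so no further comment is needed.
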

Unlike the 0-tensor case in \cite{sadiqtamasan01}, in the case of one-tensors the even and odd modes play a different role. To emphasize this difference we separate the modes:
\begin{align}\label{uEvenOdd}
\bu^{even} := \langle u_{-2},u_{-4}...\rangle, \quad \text{and} \quad
\bu^{odd} := \langle u_{-1},u_{-3},...\rangle
\end{align} and note that if $\bu$ is $\mathcal{L}^{2}$-analytic, then $\bu^{even},\bu^{odd}$ are $\mathcal{L}$-analytic.

One of the new ingredients in our characterization is the following simple property of $\mathcal{L}$-analytic maps:

Let us consider the sequence $\{\bu^{2m-1}\}_{m\geq1}\subset C(\ol\OM;l_\INF)\cap C^1(\OM;l_\INF)$ given by
\begin{align}\label{uM}
\bu^{2m-1}:=  \langle u_{2m-1}, u_{2m-3},....,u_{1}, u_{-1},u_{-3}, u_{-5}, ... \rangle, \quad m \geq 1,
\end{align}
obtained by augmenting the sequence of negative odd indices
$\langle u_{-1},u_{-3}, u_{-5}, ... \rangle$ by $m$ many terms in the order $ \underbrace{u_{2m-1}, u_{2m-3},....,u_{1}}_{m} $.

\begin{lemma}\label{UconjLemma}
Let $\{\bu^{2m-1}\}_{m\geq1}$ be the sequence of $\mathcal{L}$-analytic maps defined in \eqref{uM}. Assume that
\begin{align}\label{uTraceConj}
u_{2m-1} \lvert_{\Gam} = \ol{u_{-(2m-1)}} \lvert_{\Gam}, \forall m\geq 1.
\end{align}
Then, for each $m \geq 1$,
\begin{align}\label{identity}
u_{2m-1}(z) = \ol{u_{-(2m-1)}}(z), \quad z \in \OM,
\end{align}
\end{lemma}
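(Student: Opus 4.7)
The plan is to exhibit, for each fixed $m \ge 1$, a second $\mathcal{L}$-analytic sequence ${\bf w}^{2m-1}$ built from complex conjugates of the $u_{-k}$'s, show that its boundary trace on $\Gam$ coincides termwise with that of $\bu^{2m-1}$, and then conclude via uniqueness of $\mathcal{L}$-analytic maps from their boundary trace.

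First I would unpack \eqref{Aanalytic} componentwise. Writing $(\bu^{2m-1})_j = u_{2m+1-2j}$ for $j \ge 1$, the $\mathcal{L}$-analyticity of $\bu^{2m-1}$ reads $\dba u_{2m+1-2j} + \del u_{2m-1-2j} = 0$ in $\OM$. Since $m$ is arbitrary, this gives the single chain
\begin{equation*}
\dba u_n + \del u_{n-2} = 0 \quad \text{in } \OM, \quad \text{for every odd } n \in \BZ.
\end{equation*}
Taking complex conjugates and setting $q = n-2$ yields the equivalent chain
\begin{equation*}
\dba \ol{u_q} + \del \ol{u_{q+2}} = 0 \quad \text{in } \OM, \quad \text{for every odd } q \in \BZ.
\end{equation*}

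Next I would define the reflected-conjugated sequence
\begin{equation*}
{\bf w}^{2m-1} := \langle \ol{u_{-(2m-1)}}, \ol{u_{-(2m-3)}}, \ldots, \ol{u_{-1}}, \ol{u_1}, \ol{u_3}, \ldots \rangle,
\end{equation*}
i.e., $({\bf w}^{2m-1})_j := \ol{u_{-(2m+1-2j)}}$. Substituting $q = 2j-2m-1$ into the conjugated chain immediately yields $\dba ({\bf w}^{2m-1})_j + \del ({\bf w}^{2m-1})_{j+1} = 0$, so ${\bf w}^{2m-1}$ is $\mathcal{L}$-analytic and lies in the same regularity class as $\bu^{2m-1}$. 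To compare boundary traces, fix $j \ge 1$ and set $k = 2m+1-2j$: if $k \ge 1$, hypothesis \eqref{uTraceConj} gives $u_k = \ol{u_{-k}}$ on $\Gam$, while if $k \le -1$, writing $l = -k \ge 1$ and conjugating \eqref{uTraceConj} for the index $l$ yields $\ol{u_l} = u_{-l}$ on $\Gam$, i.e., again $u_k = \ol{u_{-k}}$ on $\Gam$. Hence $\bu^{2m-1}|_\Gam = {\bf w}^{2m-1}|_\Gam$ termwise, so ${\bf v}^{2m-1} := \bu^{2m-1} - {\bf w}^{2m-1}$ is an $\mathcal{L}$-analytic map with vanishing trace on $\Gam$.

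Finally, the Bukhgeim-Cauchy reproducing formula of Theorem \ref{BukhgeimCauchyThm} (equivalently, uniqueness for $\mathcal{L}$-analytic maps with vanishing boundary trace) forces ${\bf v}^{2m-1} = \B(\bO) \equiv \bO$ in $\OM$; reading off the first entry yields \eqref{identity} for every $m \ge 1$. The only delicate point is the index bookkeeping needed to verify $\mathcal{L}$-analyticity of the reflected-conjugated sequence ${\bf w}^{2m-1}$, which requires that the parity and shift-by-two structure align correctly after the reflection $k \mapsto -k$; once this is verified, trace matching and the reproducing formula deliver the identity immediately.
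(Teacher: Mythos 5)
Your proof is correct and follows essentially the same route as the paper's: conjugating the chain of $\mathcal{L}$-analyticity relations to produce a second $\mathcal{L}$-analytic sequence with the same boundary trace, and then invoking uniqueness of $\mathcal{L}$-analytic maps determined by their traces. The only cosmetic difference is that you compare, for each $m$, the full sequence $\bu^{2m-1}$ with its reflected conjugate ${\bf w}^{2m-1}$, whereas the paper makes a single comparison between the two tails $\langle \ol{u_1},\ol{u_3},\dots\rangle$ and $\langle u_{-1},u_{-3},\dots\rangle$, which yields all the identities at once.
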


\begin{proof}
For each $m\geq 2$, since $\bu^{2m-1}$ is $\mathcal{L}$-analytic,
\begin{align*}
\ol{\del} u_{2j-1} + \del u_{2j-3} = 0, \quad 2 \leq j
\leq m.
\end{align*} By taking the conjugate and using the fact that $m$ is arbitrary, we get
\begin{align*}
\ol{\del} \ol{u_{2m-3}} + \del \ol{u_{2m-1}} = 0, \quad \forall m \geq 2.
\end{align*} In other words the sequence
$\langle \ol{u_1}, \ol{u_3}, \ol{u_5}, \cdots \rangle$ is $\mathcal{L}$-analytic, and by \eqref{uTraceConj} has the same trace as the sequence $\langle u_{-1}, u_{-3}, u_{-5}, \cdots \rangle$.
Since the latter is also $\mathcal{L}$-analytic and
$\mathcal{L}$-analytic map are uniquely determined by their traces  the identity \eqref{identity} holds.
\end{proof}

\section{Range Characterization of the Doppler Transform} \label{Range1TensorNonAtten}

In this section we establish necessary and sufficient conditions for a sufficiently smooth function $g$ on $\Gam \times \sph$ to be the Doppler data of some sufficiently smooth real valued vector field $\bF= \langle F_{1}, F_{2}\rangle$ in the sense of \eqref{Doppler_definition}, i.e.
$g$ is the trace on $\Gam\times\sph$ of some solution $u$ of
\begin{align}\label{transportEQ_a=0}
&\tta\cdot\nabla_x u(x,\tta)=\tta\cdot\bF(x), \quad x\in\OM.
\end{align}

For $z=x_1+ix_2\in \OM$, we consider the Fourier expansions of $u(z,\cdot)$ in the angular variable $\tta=\langle\cos\fii,\sin\fii\rangle$:
\begin{align*}
u(z,\theta) = \sum_{-\infty}^{\infty} u_{n}(z) e^{in\fii}.
\end{align*}Since $u$ is real valued its Fourier modes occur in conjugates,
$$u_{-n}(z)= \ol{u_{n}(z)}, \quad n \geq 0, \;z\in\OM.$$

With the Cauchy-Riemann operators defined in \eqref{CauchyRiemannOp} the advection operator becomes
\begin{align}\label{adv}
\tta\cdot\nabla = e^{-i\fii} \ol{\del} + e^{i\fii} \del.
\end{align}
Provided appropriate convergence of the series (given by smoothness in the angular variable)
we see that if $u$ solves \eqref{DopplerTransEq} then its Fourier coefficients solve the system
\begin{align}\label{NDzeroEq}
&\ol{\del} u_{1}(z) + \del u_{-1}(z) = 0, \\ \label{NAf1Eq}
&\ol{\del} u_{0}(z) + \del u_{-2}(z) = f_{1}(z), \\ \label{NDAAnalyticEq}
&\ol{\del} u_{-n}(z) + \del u_{-n-2}(z) = 0, \quad n \geq 1,
\end{align} where $f_{1} = \left( F_{1}+i F_{2} \right) /2$. The proof of Theorem \ref{NADopplerT} below also shows the converse.

The range characterization of the Doppler data $g$ will be given in terms of its Fourier modes in the angular variables:
\begin{align}\label{FourierData}
g(\zeta,\theta) = \sum_{-\infty}^{\infty} g_{n}(\zeta) e^{in\fii}, \quad \zeta \in \Gam.
\end{align}Since the trace $g$ is also real valued, its Fourier modes will satisfy
\begin{align}
g_{-n}(\zeta)= \ol{g_{n}(\zeta)}, \quad n \geq 0,\;\zeta\in\Gam.
\end{align}

From the negative even modes, we built the sequence
\begin{align}\label{gEvenNozero}
\bg^{even}:=\langle {g}_{-2}, g_{-4}, g_{-6},... \rangle.
\end{align}
For each $m \geq 1 $, we use the odd modes to build the sequence
\begin{align}\label{gM}
\bg^{2m-1}:= \langle g_{2m-1}, g_{2m-3},....,g_{1}, g_{-1},g_{-3}, g_{-5}, ... \rangle
\end{align}
by augmenting the negative odd indices by $m$-many terms in the order $$ \underbrace{g_{2m-1}, g_{2m-3},....,g_{1}}_{m}. $$

\begin{theorem}[Range characterization of Doppler Transform]\label{NADopplerT}

Let $\alpha > 1/2$.

(i) Let $\bF \in C_{0}^{1,\alpha}(\ol\OM;\BR^2)$. Then  $D \bF\cap C^{\alpha}(\Gam;C^{1,\alpha}(\sph)) \neq \emptyset $, and let $g \in D \bF\cap C^{\alpha}(\Gam;C^{1,\alpha}(\sph))$ be some Doppler data of $\bF$ in the sense of \eqref{Doppler_definition}.
 Consider the corresponding sequences $\bg^{even}$ as in
\eqref{gEvenNozero} and $\bg^{2m-1}$ for $m \geq 1$ as in \eqref{gM}. Then
 $\bg^{even}, \bg^{2m-1} \in l^{1,1}_{\INF}(\Gamma)\cap C^\alpha(\Gamma;l_1)$, for $m \geq 1$, and satisfy
\begin{align}\label{RT1TensorCond1}
&[I+i\HT]\bg^{even} =0, \\ \label{RT1TensorCond2}
&[I+i\HT]\bg^{2m-1} =0, \quad \forall m \geq 1,
\end{align} where the operator $\HT$ is the Hilbert transform in \eqref{hilbertT}.

(ii) Let $g\in C^{\alpha} \left(\Gam; C^{1,\alpha}(\sph) \right)\cap C^0(\Gam;C^{2,\alpha}(\sph))$, be real valued such that its corresponding sequence $\bg^{even}$ as in \eqref{gEvenNozero} satisfies \eqref{RT1TensorCond1} and $\bg^{2m-1}$ for $m \geq 1$, as in \eqref{gM} satisfies \eqref{RT1TensorCond2}. Then there exists a real valued vector field $\bF \in C(\OM;\BR^2)$, such that $g\in D\bF$ is the Doppler data of $\bF$.
\end{theorem}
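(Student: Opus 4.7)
The strategy is to construct $g$ as the boundary trace of the (unique) solution $u$ to the transport equation \eqref{DopplerTransEq} with $u\lvert_{\Gam_-} = 0$, and then to extract the required Hilbert identities from the angular Fourier expansion of $u$. With $\bF \in C_0^{1,\alpha}(\ol\OM;\BR^2)$, the explicit line-integral formula for $u$ yields $g:= u\lvert_{\Gam \times \sph} \in C^\alpha(\Gam; C^{1,\alpha}(\sph))$, and, by construction, $g \in D\bF$. Writing $u(z,\tta) = \sum_n u_n(z) e^{in\fii}$ and splitting \eqref{DopplerTransEq} by angular Fourier mode via \eqref{adv} gives precisely \eqref{NDzeroEq}--\eqref{NDAAnalyticEq}. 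Since $u$ is real valued, $u_n = \ol{u_{-n}}$, so conjugating the chain of negative-odd relations produces an $\mathcal{L}$-analytic chain on the positive odd indices that links through \eqref{NDzeroEq} across index $0$. Consequently $\bu^{even}$ and each augmented sequence $\bu^{2m-1}$ of \eqref{uM} is $\mathcal{L}$-analytic in $\OM$ with boundary traces $\bg^{even}$ and $\bg^{2m-1}$; applying Theorem \ref{NecSuf} delivers \eqref{RT1TensorCond1}--\eqref{RT1TensorCond2}.

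\textbf{For (ii) (sufficiency):} The plan is to reverse this scheme. Combined with Theorem \ref{NecSuf}, the hypotheses \eqref{RT1TensorCond1}--\eqref{RT1TensorCond2} allow one to apply the Bukhgeim-Cauchy operator $\B$ of Theorem \ref{BukhgeimCauchyThm} to obtain $\mathcal{L}$-analytic extensions $\B\bg^{even}$ and $\B\bg^{2m-1}$ for every $m \geq 1$, with $C^2(\OM;l_\INF)$ interior regularity via Proposition \ref{extraRegularity} (the assumption $g \in C^0(\Gam;C^{2,\alpha}(\sph))$ places $\bg^{even},\bg^{2m-1} \in Y_\alpha$). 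Uniqueness of $\mathcal{L}$-analytic maps from their traces forces $\mathcal{L}(\B\bg^{2m+1}) = \B\bg^{2m-1}$, so the negative-odd components agree across all $m$ and may be labelled $u_{-(2k-1)}$; the negative-even components $u_{-2k}$ come directly from $\B\bg^{even}$. Reality of $g$ yields $g_{2m-1} = \ol{g_{-(2m-1)}}$ on $\Gam$, which is precisely the hypothesis of Lemma \ref{UconjLemma}, so the positive-odd slots of $\B\bg^{2m-1}$ satisfy $u_{2m-1}(z) = \ol{u_{-(2m-1)}}(z)$ in $\OM$. Set $u_{2k} := \ol{u_{-2k}}$ for $k \geq 1$, pick any real-valued $u_0$ of sufficient interior smoothness extending the (necessarily real) datum $g_0$, and define $f_1 := \ol\del u_0 + \del u_{-2}$ together with $\bF := \langle 2\re f_1,\, 2\im f_1\rangle \in C(\OM;\BR^2)$.

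\textbf{Verification and main obstacle:} Assemble $u(z,\tta) := \sum_n u_n(z) e^{in\fii}$; the $Y_\alpha$-type decay of modes makes the series and its first angular derivative absolutely convergent. Using \eqref{adv}, the $m$-th angular Fourier coefficient of $\tta\cdot\nabla u - \tta\cdot\bF$ equals $\ol\del u_{m+1} + \del u_{m-1} - (\tta\cdot\bF)_m$, which vanishes for $m = -1$ by the definition of $f_1$, for $m = +1$ by conjugating the $m = -1$ relation and using $u_0 \in \BR$ together with $u_n = \ol{u_{-n}}$, and for every other $m$ by the $\mathcal{L}$-analyticity relations already collected. Boundary matching is mode-by-mode: the negative modes and $u_0$ by construction, the positive modes via $u_n\lvert_\Gam = \ol{u_{-n}\lvert_\Gam} = \ol{g_{-n}} = g_n$. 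The crux of the argument is the middle step, because the Bukhgeim-Cauchy machinery only reconstructs negative-index $\mathcal{L}$-analytic chains, whereas both the definition of $\bF$ via $\ol\del u_0 + \del u_{-2}$ and the transport identity at $m = \pm 1$ require access to positive modes; Lemma \ref{UconjLemma} is exactly what bridges this gap, and simultaneously explains why the odd negative modes carry the extra family of constraints \eqref{RT1TensorCond2} (each appears in infinitely many $\bg^{2m-1}$), while $u_0$ remains genuinely free, reflecting the residual gradient gauge freedom of $\bF$ in the non-attenuated case.
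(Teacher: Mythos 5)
Your proposal is correct and follows essentially the same route as the paper: Fourier-mode splitting of the transport equation plus reality of $u$ to obtain $\mathcal{L}$-analyticity of $\bu^{even}$ and of each augmented chain $\bu^{2m-1}$ for necessity, and the Bukhgeim--Cauchy operator applied to $\bg^{even}$ and to all $\bg^{2m-1}$, glued together by trace-uniqueness of $\mathcal{L}$-analytic maps and by Lemma \ref{UconjLemma}, for sufficiency. The only cosmetic difference is that in part (i) you phrase the argument for the particular solution with vanishing incoming trace, whereas the theorem concerns an arbitrary $g\in D\bF\cap C^{\alpha}(\Gam;C^{1,\alpha}(\sph))$; your mode-by-mode argument applies verbatim to any such $g$ once one notes, as the paper does, that any such $g$ is the trace of a sufficiently regular solution $u$.
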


\begin{proof} (i) For the justification of $D \bF\cap C^{\alpha}(\Gam;C^{1,\alpha}(\sph)) \neq \emptyset $, we refer to the proof of \cite[Theorem 4.1]{sadiqtamasan01} (where $\tta \cdot \bF$ is to replace the zero-tensor $f$).

Now let $g \in D \bF\cap C^{\alpha}(\Gam;C^{1,\alpha}(\sph))$ be the Doppler data of the given $\bF$. Then by
\cite[Proposition 4.1]{sadiqtamasan01} $\bg^{even}, \bg^{2m-1} \in l^{1,1}_{\INF}(\Gamma)\cap C^\alpha(\Gamma;l_1)$, for all $m \geq 1$. Moreover, $g$ is the trace on $\Gam \times \sph$ of a solution $u \in C^{\alpha}(\ol{\OM}; C^{1,\alpha}(\sph))$.

Since the Fourier modes $u_n$ of $u$ satisfies  the system  \eqref{NDAAnalyticEq} for $n$ negative even, then
$$z\mapsto  \bu^{even}(z):= \langle u_{-2}(z), u_{-4}(z), u_{-6}(z), \cdots \rangle$$
is $\mathcal{L}$-analytic in $\OM$ and the necessity part in Theorem  \ref{NecSuf} yields  \eqref{RT1TensorCond1}.

The equations \eqref{NDzeroEq} and \eqref{NDAAnalyticEq} for odd indices yield that the sequence valued map
\begin{align*}
z \mapsto \bu^{1}(z) := \langle u_{1}(z), u_{-1}(z), u_{-3}(z) \cdots \rangle
\end{align*}
is $\mathcal{L}$-analytic with the trace satisfying
\begin{align*}
u_{2k-1} \lvert_{\Gam} = g_{2k-1}, \quad k\leq 1.
\end{align*}
By the necessity part of Theorem  \ref{NecSuf}, it must be that $\bg^{1}=\langle g_{1}, g_{-1},g_{-3}, ... \rangle$ satisfies
\begin{align*}
[I+i\HT]\bg^1 =0.
\end{align*}

Recall that $u$ is real valued so that $u_{n} = \ol{u_{-n}}$ for all $n \geq 0$.
Consider now the equation \eqref{NDAAnalyticEq} for $n
= 1$ and take its conjugate to get
\begin{align}\label{delu3Eq}
\ol{\del}u_{3}+ \del u_{1}=0.
\end{align} Along with \eqref{NDzeroEq} and \eqref{NDAAnalyticEq} for odd indices, the equation \eqref{delu3Eq} yield that the sequence valued map
\begin{align*}
z \mapsto \bu^{3}(z) := \langle u_{3}(z), u_{1}(z), u_{-1}(z), u_{-3}(z) \cdots \rangle
\end{align*}
is $\mathcal{L}$-analytic with the trace satisfying
\begin{align*}
u_{2k-1} \lvert_{\Gam} = g_{2k-1}, \quad k\leq 2.
\end{align*}
By Theorem  \ref{NecSuf}, it must be that $\bg^{3}=\langle g_{3},g_{1}, g_{-1},g_{-3}, ... \rangle$ satisfies
\begin{align*}
[I+i\HT]\bg^3 =0.
\end{align*}

Inductively, the argument above holds for any odd index $2m-1$ to yield
\begin{align*}
z \mapsto \bu^{2m-1}(z) := \langle u_{2m-1}(z),u_{2m-3}(z),...,u_{1}(z), u_{-1}(z), u_{-3}(z) \cdots \rangle
\end{align*}
is $\mathcal{L}$-analytic. Then, again by the necessity part in Theorem \ref{NecSuf}, its trace $\bg^{2m-1}$ must satisfy
\begin{align*}
[I+i\HT]\bg^{2m-1} =0, \quad\forall\; m \geq 1.
\end{align*}

(ii) Assume that we have $g\in C^{\alpha} \left(\Gam; C^{1,\alpha}(\sph) \right)\cap C^0(\Gam;C^{2,\alpha}(\sph))$ with the corresponding sequences  $\bg^{even}$  satisfying \eqref{RT1TensorCond1} and $\bg^{2m-1}$  satisfying \eqref{RT1TensorCond2} for all $m \geq 1$.
We need to construct a function $u(x,\tta)$  and a real valued vector field $\bF(x)$ such that \eqref{transportEQ_a=0} is satisfied and $u|_{\Gam\times\sph}=g$. We will construct $u$ via its Fourier modes in the angular variable.

First we use the Bukhgeim-Cauchy integral formula \eqref{BukhgeimCauchyFormula} to
construct the negative even Fourier modes:
\begin{align}\label{construction_EVENS}
\langle u_{-2}, u_{-4}, u_{-6}, ..., \rangle := \B \bg^{even},
\end{align}where $\B$ is the operator in \eqref{BukhgeimCauchyFormula}.
By Theorem \ref{BukhgeimCauchyThm}, the sequence valued map
\begin{align*}
z\mapsto \langle u_{-2}(z), u_{-4}(z), u_{-6}(z), ..., \rangle ,
\end{align*}is $\mathcal{L}$-analytic in $\OM$, thus the equations
\begin{align}\label{uEvenL1}
\ol{\del} u_{-2k} + \del u_{-2k-2} = 0
\end{align} are satisfied for all $k \geq 1$.
Moreover, the hypothesis \eqref{RT1TensorCond1} and the sufficiency part of Theorem \ref{NecSuf} yields the they extend continuously to $\Gam$ and
\begin{align}
u_{-2k}|_\Gam=g_{-2k},\quad k\geq 1.
\end{align}

All of the positive even Fourier modes are constructed by conjugation:
\begin{align}\label{construct_even+}
u_{2k}:=\ol{u_{-2k}},\quad k\geq 1.
\end{align}By conjugating \eqref{uEvenL1} we note that the positive even Fourier modes also satisfy
\begin{align}\label{uEvenL1+}
\ol{\del} u_{2k+2} + \del u_{2k} = 0,\quad k\geq 1.
\end{align}Moreover, they extend continuously to $\Gam$ and
\begin{align}
u_{2k}|_\Gam=\ol{u_{-2k}}|_\Gam=\ol{g_{-2k}}=g_{2k},\quad k\geq 1.
\end{align}

Let $u_0$ be an \emph{arbitrary} function in $C^1(\OM)\cap C(\ol\OM)$ with
\begin{align}\label{u_0trace}
u_0|_{\Gam}=g_0.
\end{align}
The arbitrariness of $u_0$ characterizes the non-uniqueness (up to the gradient field of a function which vanishes at the boundary) in the reconstruction of a vector field from its Doppler data.

Next we construct the odd negative indexed modes from
$$\bg^{odd}:=\langle g_{-1}, g_{-3},...\rangle$$
via the Bukhgeim-Cauchy integral formula \eqref{BukhgeimCauchyFormula} by
\begin{align}\label{constructionODDSnegative}
\bu^{odd}= \langle u_{-1}, u_{-3},  \cdots \rangle
:= \B \bg^{odd}.
\end{align}By Theorem \ref{BukhgeimCauchyThm} the sequence $\bu^{odd}$ is $\LL$-analytic, i.e.,
\begin{align} \label{negODDS}
\ol{\del} u_{-2k-1} + \del u_{-2k-3} &= 0, \quad \forall k\geq 0,
\end{align}
Note that $\mathcal{L}\bg^1=\bg^{odd}$. By hypothesis \eqref{RT1TensorCond2}, $[I+i\HT] \bg^1 =0$. Since $\HT$ commutes with the left translation $\LL$, then
\begin{align*}
0=\LL[I+i\HT] \bg^1 =[I+i\HT]\LL \bg^1 =[I+i\HT] \bg^{odd}.
\end{align*}Applying the sufficiency part of Theorem \ref{NecSuf} we have that each $u_{2k-1}$ extends continuously to $\Gamma$ and
\begin{align}
u_{-2k-1}|_\Gam=g_{-2k-1},\quad k\geq 1.
\end{align}

So far we have constructed $z\mapsto \langle u_{-1}(z),u_{-2}(z), u_{-3}(z),...\rangle$ such that
\begin{align}\label{uL2Analy}
\ol{\del} u_{-n} + \del u_{-n-2} = 0, \\
u_{-n}|_\Gam=g_{-n},\quad \forall n\geq 1.
\end{align}

If we were to define the positive odd index modes by conjugating the negative ones (as we did for the even ones) it would not be clear why
the equation \eqref{NDzeroEq} should hold.
To solve this problem we will define the positive odd modes using the Bukhgeim-Cauchy integral formula \eqref{BukhgeimCauchyFormula} inductively.

Let $\bu^1=\langle u_{1}, u^1_{-1}, u^1_{-3},  \cdots \rangle$ be the $\mathcal{L}$-analytic map defined by
\begin{align}\label{construct_u1}
 \bu^{1}:= \B \bg^1.
\end{align}
The hypothesis \eqref{RT1TensorCond2} for $m=1$,
\begin{align*}
[I+i\HT] \bg^1 =0,
\end{align*} allows us to use the sufficiency part of Theorem \ref{NecSuf} to get that $\bu^1$ extends continuously to $\Gam$ and has trace $\bg^{1}$ on $\Gam$. However, $\LL \bu^1 = \bu^{odd}$ is also $\LL$-analytic with the same trace $\bg^{odd}$ as $\bu^{odd}$. By the uniqueness of $\LL$-analytic maps with the given trace we must have the equality
$$\langle  u^1_{-1}, u^1_{-3},  \cdots \rangle = \langle u_{-1}, u_{-3},  \cdots \rangle.$$
In other words the formula \eqref{construct_u1} constructs only one new function $u_1$ and recovers the previously defined negative odd functions $u_{-1}, u_{-3},...$. In particular $\bu^1=\langle u_{1}, u_{-1}, u_{-3},  \cdots \rangle$ is $\LL$-analytic, and
\begin{align*}
\ol{\del}u_1+ \del u_{-1} =0
\end{align*}holds in $\OM$. We stress here that, at this stage, we do not know that $u_1=\ol{u_{-1}}$.

Inductively, for $m\geq 1$, the formula
\begin{align}\label{define_u^2m-1}
\bu^{2m-1}= \langle u_{2m-1}, u^{2m-1}_{2m-3}, ..., u^{2m-1}_1, u^{2m-1}_{-1}, \cdots \rangle:=\B \bg^{2m-1}
\end{align} defines a sequence $\{\bu^{2m-1}\}_{m\geq1}$ of $\LL$-analytic maps with $\bu^{2m-1}\lvert_{\Gam} = \bg^{2m-1}$.
By the uniqueness of $\LL$-analytic maps with the given trace, a similar reasoning as above shows
\begin{align*}
\mathcal{L}\bu^{2m-1} = \bu^{2m-3}, \quad \forall m \geq 2.
\end{align*}In particular for all $m \geq 1$
\begin{align*}
\bu^{2m-1} = \langle u_{2m-1}, u_{2m-3}, ..., u_1, u_{-1}, \cdots \rangle
\end{align*} is $\LL$-analytic.

Note that the sequence $\{\bu^{2m-1}\}_{m\geq 1}$ constructed above satisfies the hypotheses of the Lemma \ref{UconjLemma}, and therefore
for each $m \geq 1$,
\begin{align}\label{exceptional}
u_{2m-1}(z) = \ol{u_{-(2m-1)}}(z), \quad z \in \OM.
\end{align}
We stress here that the identities \eqref{exceptional} need the hypothesis \eqref{RT1TensorCond2} for all $m\geq 1$, cannot be inferred directly from the Bukhgeim-Cauchy integral formula \eqref{BukhgeimCauchyFormula} for finitely many $m$'s.

We have shown that
\begin{align}\label{PostiveOdds}
\ol{\del} u_{2k-1} + \del{u_{2k-3}} = 0, \quad \forall k\in \mathbb{Z}.
\end{align}

We define now the real valued vector field $\bF = \langle 2 \re {f_1}, 2\im{f_1} \rangle$, where
\begin{align}\label{f1definition}
f_{1}:=\ol{\del}u_{0}+\del u_{-2},
\end{align}where,
by Theorem \ref{BukhgeimCauchyThm} we know that $u_{-2}\in C^{1,\alpha}(\OM)$. Together with the assumed regularity of $u_0\in C^1(\OM)$ we have   $\bF \in C(\OM;\BR^2)$.

We define $u$ by
\begin{align}\label{definitionU}
u(z, \tta):=u_{0}(z)+ \sum_{n = 1}^{\INF} u_{-n}(z)e^{-i n\fii} + \sum_{n = 1}^{\INF} u_{n}(z)e^{i n\fii}
\end{align} and check that it has the trace $g$ on $\Gam$ and satisfies the transport equation \eqref{transportEQ_a=0} in $\OM$.

Since $g\in C^{\alpha} \left(\Gam; C^{1,\alpha}(\sph) \right)\cap C^0(\Gam;C^{2,\alpha}(\sph))$, we use \cite[Corolloary 4.1]{sadiqtamasan01} and
\cite[Proposition 4.1]{sadiqtamasan01} to conclude that $u$ defined in \eqref{definitionU} belongs to $C^{1,\alpha}(\OM \times \sph)\cap C^{\alpha}(\ol{\OM}\times \sph)$. In particular $u(\cdot,\tta)$ for $\tta=\langle\cos\fii,\sin\fii\rangle$ extends to the boundary and its trace satisfies
\begin{align*}
u(\cdot,\tta)\lvert_{\Gam\times \sph} &=\left.\left ( u_{0}+ \sum_{n = 1}^{\INF} u_{-n}e^{-i n\fii} + \sum_{n = 1}^{\INF} u_{n}e^{i n\fii}\right) \right\lvert_{\Gam}\\
&=u_{0}\lvert_{\Gam}+ \sum_{n = 1}^{\INF} u_{-n}\lvert_{\Gam}e^{-i n\fii} + \sum_{n = 1}^{\INF} u_{n}\lvert_{\Gam}e^{i n\fii}\\
&=g_{0}+ \sum_{n = 1}^{\INF} g_{-n}e^{-i n\fii} + \sum_{n = 1}^{\INF} g_{n}e^{i n\fii} = g(\cdot,\tta).
\end{align*}

Since $u\in C^{1,\alpha}(\OM \times \sph)\cap C^{\alpha}(\ol{\OM}\times \sph)$, the following calculation is also justified: \begin{align*}
\tta \cdot \nabla u &= e^{-i \fii} \ol{\del}u_0 + e^{i \fii} \del u_0 + \sum_{n = 1}^{\INF} \ol{\del}u_{-n}e^{-i (n+1) \fii} + \sum_{n = 1}^{\INF} \del u_{-n}e^{-i (n-1) \fii} \\
& \qquad + \sum_{n = 1}^{\INF} \ol{\del}u_{n}e^{i (n-1) \fii} + \sum_{n = 1}^{\INF} \del u_{n}e^{i (n+1) \fii}\\
&= e^{-i \fii} (\ol{\del}u_0 +\del u_{-2}) + e^{i \fii} (\del u_0 +\ol{\del}u_2) +\ol{\del}u_1+ \del u_{-1} \\
& \qquad +\sum_{n = 1}^{\INF} (\ol{\del}u_{-n}+ \del u_{-n-2})e^{-i (n+1) \fii} + \sum_{n = 1}^{\INF} ( \ol{\del}u_{n+2}+\del u_{n}) e^{i (n+1) \fii} \\
&= e^{-i \fii} f_1 + e^{i \fii} \ol{f_1},\\
&= \tta \cdot \bF,
\end{align*}to conclude \eqref{transportEQ_a=0}.
In the third equality above we used  \eqref{uL2Analy}, \eqref{PostiveOdds}, and \eqref{f1definition}.
\end{proof}

{\bf Remark}: The Doppler data $g$ of a vector field $\bF$ and that of $\bF+ \nabla \psi$, where $\psi \in C^1(\OM)$ with $\psi \lvert_{\Gam}=0$ is the same. This non-uniqueness is transparent in our approach: the function $u_0+\psi$ satisfies
$$\ol{\del}(u_0+ \psi)+ \del{u_{-2}} = f_1+\ol\del\psi,$$ and its trace on $\Gam$ is still the same $g_0$.

%%%%%%%%%%%%%%%%%%%%% attenuated Case %%%%%%%%%%%%

\section{Range Characterization of the attenuated Doppler Transform} \label{Range1TensorAtten}

In this section we assume an attenuation $a>0$ in $\OM$ be given. We establish necessary and sufficient conditions for a sufficiently smooth function $g$ on $\Gam \times \sph$ to be the attenuated Doppler data, with attenuation $a$, of some sufficiently smooth real valued vector field $\bF= \langle F_{1}, F_{2}\rangle$ in the sense of \eqref{Doppler_definition}, i.e. $g$ is the trace on $\Gam\times\sph$ of some solution $u$ of \eqref{DopplerTransEq},
\begin{align*}
&\tta\cdot\nabla u(x,\tta)+ a(x)u(x,\tta)=\tta\cdot\bF(x), \quad x\in\OM.
\end{align*}

When $a>0$ the attenuated Doppler transform uniquely determines the vector field as shown in \cite{kazantsevBukhgeimJr04}. This uniqueness will be seen in the boundary data, where the zero-th mode $g_0$ is determined by the negative index modes.

As in \cite{sadiqtamasan01} we start by the reduction to the non-attenuated case via the special integrating factor
$e^{-h}$, where $h$ is explicitly defined in terms of $a$ by
\begin{align}\label{hDefn}
h(z,\tta) := Da(z,\theta) -\frac{1}{2} \left( I - i H \right) Ra(z\cdot \tta^{\perp},\tta),
\end{align}where $\tta^\perp$ is  orthogonal  to $\tta$, $Ra(s,\tta) = \ds \int_{-\INF}^{\INF} a\left( s \tta^{\perp} +t \tta \right)dt$ is the Radon transform of the attenuation, and
the classical Hilbert transform $H h(s) = \ds \frac{1}{\pi} \int_{-\INF}^{\INF} \frac{h(t)}{s-t}dt $ is taken in the first variable and evaluated at $s = z \cdotp \tta^{\perp}$.  The function $h$  was first considered in the work of Natterrer \cite{naterrerBook}; see also \cite{finch}, and \cite{bomanStromberg} for elegant arguments that show how $h$ extends from $\sph$ inside the disk as an analytic map.

We do not use the definition of $h$ but rather the following properties.

\begin{lemma}\label{hproperties}
Assume $a\in C^{p,\alpha}_{0}(\ol \OM)$, $p = 1,2$, $\alpha>1/2$, and $h$ defined in \eqref{hDefn}. Then $h \in C^{p,\alpha}(\ol \OM \times \sph)$ and the following hold

(i) $h$ satisfies \begin{align}\label{h=IntegratingFactor}
\tta \cdot \nabla h(z,\tta) = -a(z), \; (z, \tta) \in \OM \times \sph.
\end{align}

(ii) $h$ has vanishing negative Fourier modes yielding the expansions
\begin{align}\label{ehEq}
  e^{- h(z,\tta)} := \sum_{k=0}^{\INF} \alpha_{k}(z) e^{ik\fii}, \quad e^{h(z,\tta)} := \sum_{k=0}^{\INF} \beta_{k}(z) e^{ik\fii}, \; (z, \tta) \in \ol\OM \times \sph,
\end{align}
with

(iii)
\begin{align}
&z\mapsto \langle \alpha_{1}(z), \alpha_{2}(z), \alpha_{3}(z), ... , \rangle \in C^{p,\alpha}(\OM ; l_{1})\cap C(\ol\OM ; l_{1}), \\
&z\mapsto \langle \beta_{1}(z), \beta_{2}(z), \beta_{3}(z), ... , \rangle \in C^{p,\alpha}(\OM ; l_{1})\cap C(\ol\OM ; l_{1}).\label{betasDecay}
\end{align}

(iv) For any $z \in \OM $
\begin{align}\label{betazero}
&\ol{\del} \beta_0(z) = 0, \\ \label{betaone}
& \ol{\del} \beta_1(z) = -a(z) \beta_0(z),\\ \label{betak}
& \ol{\del} \beta_{k+2}(z) +\del \beta_{k}(z) +a(z) \beta_{k+1}(z)=0, \; k \geq 0.
\end{align}

(v) For any $z \in \OM $
\begin{align}\label{alphazero}
&\ol{\del} \alpha_0(z) = 0, \\ \label{alphaone}
& \ol{\del} \alpha_1(z) = a(z) \alpha_0(z),\\ \label{alphak}
& \ol{\del} \alpha_{k+2}(z) +\del \alpha_{k}(z) +a(z) \alpha_{k+1}(z)=0, \; k \geq 0.
\end{align}

(vi) The Fourier modes $\alpha_{k}, \beta_{k}, k\geq 0$ satisfy
\begin{align}\label{alphabetaSys}
\alpha_0 \beta_0 =1, \quad \sum_{m=0}^{k} \alpha_{m}\beta_{k-m}=0, k \geq 1.
\end{align}
\end{lemma}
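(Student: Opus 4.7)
The plan is to verify the six claims in sequence. Claim (i) is a direct differentiation of (\ref{hDefn}); (ii) is the structural core and the main obstacle, since it requires the analytic-extension machinery of Natterer/Finch/Boman--Str\"omberg referenced just above the lemma; (iii) is a regularity transfer inherited from $a$; and (iv)--(vi) fall out algebraically once (i) and (ii) are in hand. For (i), I would differentiate (\ref{hDefn}) along $\tta$. A direct computation of $\frac{d}{ds}Da(z+s\tta,\tta)$ at $s=0$, using $\tau_+(z+s\tta,\tta)=\tau_+(z,\tta)-s$ and the substitution $u=s+t$, rewrites $Da(z+s\tta,\tta)=\int_s^{\tau_+(z,\tta)}a(z+u\tta)\,du$, whose derivative at $0$ is $-a(z)$. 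The Hilbert-transform term in (\ref{hDefn}) depends on $z$ only through the scalar $z\cdot\tta^\perp$, and $\tta\cdot\nabla_z(z\cdot\tta^\perp)=\tta\cdot\tta^\perp=0$, so that term contributes nothing.

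For (ii), identifying $\tta=(\cos\varphi,\sin\varphi)$ with $e^{i\varphi}\in\sph$, the classical cancellation identity shows that $\theta\mapsto h(z,\theta)$ extends to a function on the closed unit disk that is holomorphic inside: the subtraction $\tfrac{1}{2}(I-iH)Ra(z\cdot\tta^\perp,\tta)$ is precisely the anti-holomorphic part of $Da$ in $\theta$, so the difference $h$ is holomorphic in $\theta$. Consequently $h(z,\cdot)$ has only non-negative angular Fourier modes, and since formal series supported on $\{0,1,2,\dots\}$ form a commutative algebra closed under exponentiation, $e^{\pm h}$ inherit the expansions (\ref{ehEq}). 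Claim (iii) follows because $a\in C^{p,\alpha}_0(\ol\OM)$ transfers through $Da$ and $H$ to give $h\in C^{p,\alpha}(\ol\OM\times\sph)$, hence $e^{\pm h}\in C^{p,\alpha}(\ol\OM\times\sph)$; the standard decay $|\alpha_k(z)|,|\beta_k(z)|\leq C k^{-(p+\alpha)}$ for Fourier coefficients of a $C^{p,\alpha}$ angular function, together with $p+\alpha>3/2>1$, yields $l_1$-summability uniformly in $z$, and the $z$-H\"older estimates are those already established in \cite{sadiqtamasan01}.

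The remaining identities are algebraic consequences of (i) and (ii). Applying (i) to $e^{\pm h}$ gives $\tta\cdot\nabla e^h=-a\,e^h$ and $\tta\cdot\nabla e^{-h}=a\,e^{-h}$. Using $\tta\cdot\nabla=e^{-i\varphi}\ol{\del}+e^{i\varphi}\del$ and substituting the one-sided expansions from (ii), each side becomes a Fourier series in $\varphi$; matching the coefficients of $e^{-i\varphi}$, of the constant mode, and of $e^{ik\varphi}$ for $k\geq 1$ separately produces the three equations of (iv) (and respectively (v)). Finally, the pointwise identity $e^h\cdot e^{-h}\equiv 1$ expands via Cauchy product into $\sum_{k\geq 0}\bigl(\sum_{m=0}^k\alpha_m\beta_{k-m}\bigr)e^{ik\varphi}\equiv 1$, and uniqueness of Fourier coefficients yields (vi).
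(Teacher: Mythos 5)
Your proposal is correct and follows essentially the same route as the paper: (i) from the transport property of $Da$ together with the fact that the Hilbert-transform term in \eqref{hDefn} is constant along lines in the direction $\tta$; (ii) deferred to the classical Natterer/Finch analytic-extension result plus closure of one-sided Fourier series under products and exponentiation; (iii) from the decay of Fourier coefficients of a $C^{p,\alpha}$ angular function (the paper invokes Bernstein's lemma, an equivalent device here); and (iv)--(vi) by matching Fourier modes in $\tta\cdot\nabla e^{\pm h}=\mp a\, e^{\pm h}$ and in $e^{-h}e^{h}=1$. One caveat: carried out literally, your mode-matching for $e^{-h}$ (where $\tta\cdot\nabla e^{-h}=+a\,e^{-h}$) yields $\ol{\del}\alpha_{k+2}+\del\alpha_{k}-a\alpha_{k+1}=0$, so \eqref{alphak} as printed carries a sign error in the $a\alpha_{k+1}$ term (the printed version is inconsistent with \eqref{alphaone} and with the converse direction of Lemma \ref{intrinsic}); your derivation produces the correct identity, but you should record it explicitly rather than assert that it reproduces \eqref{alphak} verbatim.
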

\begin{proof}
The regularity of $h$ follows from \cite[Proposition 5.1]{sadiqtamasan01}. The proof of (i) follows from the fact that the term $$z\mapsto\left( I - i H \right) Ra(z\cdot \tta^{\perp},\tta)$$ is constant along lines in the direction of $\tta$.
For the proof of (ii) we refer to \cite{finch}. The regularity in (iii) follows from Bernstein's Lemma \cite[Chpt. I. Theorem 6.3]{katznelson} and the regularity of $h$.
The proof of (iv) and (v) follow by identifying like Fourier modes in the identities
$$\tta \cdot \nabla e^{\pm h(z,\tta)} = \mp e^{\pm h(z, \tta)}a(z),$$ and \eqref{adv}.
The proof of (vi) follows by identifying like Fourier modes in the identity
$e^{-h}e^{h}=1.$
\end{proof}

From \eqref{h=IntegratingFactor} it is easy to see that $u$ solves \eqref{DopplerTransEq} if and only if $v:=e^{-h}u$ solves
\begin{align}\label{transp_a>0_in_v}
\tta\cdot\nabla v(z,\tta)=(\tta\cdot\bF)e^{-h(z,\tta)}.
\end{align}

If $ u(z,\tta) = \sum_{n =-\INF}^{\INF} u_{n}(z) e^{in\fii} $ solves  \eqref{DopplerTransEq}, then its Fourier modes satisfy
\begin{align}\label{DzeroEq}
\ol{\del} u_{1}(z) &+ \del u_{-1}(z) +a(z)u_{0}(z) = 0, \\ \label{Af1Eq}
\ol{\del} u_{0}(z) &+ \del u_{-2}(z) +a(z)u_{-1}(z) = f_{1}(z), \\ \label{DAAnalyticEq}
\ol{\del} u_{n}(z) &+ \del u_{n-2}(z) +a(z)u_{n-1}(z) = 0, \quad n \leq -1,
\end{align} where $f_{1} = \left( F_{1}+i F_{2} \right) /2$  as before.

Also, if $v:=e^{-h}u=\sum_{n =-\INF}^{\INF} v_{n}(z) e^{in\fii}$ solves \eqref{transp_a>0_in_v}, then its Fourier modes satisfy
\begin{align}
\ol{\del} v_{1}(z) &+ \del v_{-1}(z) = \alpha_0(z){f_1}(z),\nonumber \\
\ol{\del} v_{0}(z) &+ \del v_{-2}(z)= \alpha_1(z){f_1}(z),  \nonumber \\
\ol{\del} v_{n}(z) &+ \del v_{n-2}(z)= 0, \quad n \leq -1,\label{DAAnalyticEq_v}
\end{align}
where $\alpha_0$, $\alpha_1$ are the Fourier modes in \eqref{ehEq}, and $f_1$  as above.

The following result shows that the equivalence between \eqref{DAAnalyticEq} and \eqref{DAAnalyticEq_v} is intrinsic to negative Fourier modes only.
\begin{lemma}\label{intrinsic} Assume $a\in C^{1,\alpha}_{0}(\ol \OM), \alpha>1/2$.

(i) Let $\bv= \langle v_{-1}, v_{-2}, ..., \rangle \in C^1(\OM, l_1)$ satisfy \eqref{DAAnalyticEq_v}, and $\bu = \langle u_{-1}, u_{-2}, ..., \rangle$ be defined componentwise by the convolution
  \begin{align}\label{UintermsV}
 u_{n} := \sum_{j=0}^{\INF} \beta_j v_{n-j}, \; n \leq -1.
 \end{align} where $\beta_j's$ are the Fourier modes in \eqref{ehEq}.
 Then $\bu$ solves \eqref{DAAnalyticEq} in $\OM$.

 (ii) Conversely, let $\bu= \langle u_{-1}, u_{-2}, ..., \rangle \in C^1(\OM, l_1)$ satisfy \eqref{DAAnalyticEq}, and $\bv = \langle v_{-1}, v_{-2}, ..., \rangle$ be defined componentwise by the convolution
  \begin{align}\label{VintermsU}
 v_{n} := \sum_{j=0}^{\INF} \alpha_j u_{n-j}, \; n \leq -1.
 \end{align} where $\alpha_j's$ are the Fourier modes in \eqref{ehEq}.
 Then $\bv$ solves \eqref{DAAnalyticEq_v} in $\OM$.
\end{lemma}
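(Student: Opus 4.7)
The strategy is to verify both Cauchy--Riemann--type systems by substituting the convolution formulas directly and reorganizing the resulting series mode-by-mode; the proof of (ii) is the mirror image of (i) with the roles of $\alpha_j$ and $\beta_j$ swapped. The structural input is that $\{\alpha_j\}$ and $\{\beta_j\}$ are indexed only by nonnegative integers (property (ii) of Lemma \ref{hproperties}), so each convolution expresses a negative-index mode of one sequence purely in terms of negative-index modes of the other. This is exactly what decouples the system for the negative modes from the positive ones and makes the argument self-contained.

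For part (i), I would substitute $u_n = \sum_{j\geq 0}\beta_j v_{n-j}$ into the left-hand side of \eqref{DAAnalyticEq}, apply the Leibniz rule, and split into two series:
\begin{align*}
\ol{\del} u_n + \del u_{n-2} + a u_{n-1}
&= \sum_{j\geq 0} \beta_j \bigl(\ol{\del} v_{n-j} + \del v_{n-2-j}\bigr) \\
&\quad + \sum_{j\geq 0} \bigl[(\ol{\del}\beta_j)\, v_{n-j} + (\del\beta_j)\, v_{n-2-j} + a\beta_j\, v_{n-1-j}\bigr].
\end{align*}
For $n\leq -1$ and $j\geq 0$ the shifted index $n-j\leq -1$, so each term of the first sum vanishes by the hypothesis \eqref{DAAnalyticEq_v}. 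In the second sum I would regroup by the $v$-index: the coefficient of $v_{n-k}$ is $\ol{\del}\beta_k + \del\beta_{k-2} + a\beta_{k-1}$ (with the convention $\beta_{-1}=\beta_{-2}=0$), which vanishes for every $k\geq 0$ by the identities \eqref{betazero}, \eqref{betaone}, \eqref{betak} applied in the cases $k=0$, $k=1$, and $k\geq 2$ respectively.

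Part (ii) proceeds dually: starting from $v_n = \sum_{j\geq 0}\alpha_j u_{n-j}$ and distributing $\ol{\del}$, $\del$ via Leibniz, I would use \eqref{DAAnalyticEq} at each shifted index $n-j\leq -1$ to replace the block $\ol{\del} u_{n-j}+\del u_{n-j-2}$ by $-a\, u_{n-j-1}$. What remains is a single series whose $u_{n-k}$-coefficient is $\ol{\del}\alpha_k + \del\alpha_{k-2} - a\alpha_{k-1}$, and these vanish for every $k\geq 0$ by the companion identities \eqref{alphazero}, \eqref{alphaone}, \eqref{alphak}.

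The main source of friction is bookkeeping: separating the boundary cases $k=0$ and $k=1$, where one or two of the three would-be Fourier modes of $\beta$ (resp.\ $\alpha$) are absent because of the non-negativity of indices, from the generic case $k\geq 2$ where all three identities of (iv) (resp.\ (v)) combine. The exchange of $\ol{\del},\del$ with the infinite sums is justified by the hypotheses $\bu,\bv\in C^1(\OM,l_1)$ together with the $C^{1,\alpha}(\OM;l_1)$-regularity of $\{\alpha_j\},\{\beta_j\}$ from property (iii) of Lemma \ref{hproperties}, which make the termwise differentiation of the convolution legitimate.
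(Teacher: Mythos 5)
Your proof is correct and essentially identical to the paper's: part (i) is the same Leibniz-plus-regrouping computation (split off $\sum_j\beta_j(\ol{\del}v_{n-j}+\del v_{n-j-2})$, kill it with \eqref{DAAnalyticEq_v}, then collect the coefficient $\ol{\del}\beta_k+\del\beta_{k-2}+a\beta_{k-1}$ of $v_{n-k}$), and part (ii), which the paper dismisses as ``an analogue calculation,'' is exactly the calculation you spell out. One remark: the coefficient $\ol{\del}\alpha_k+\del\alpha_{k-2}-a\alpha_{k-1}$ you obtain in (ii) vanishes by the correct $\alpha$-recursion $\ol{\del}\alpha_{k+2}+\del\alpha_{k}-a\alpha_{k+1}=0$, which follows from $\tta\cdot\nabla e^{-h}=a\,e^{-h}$ and is consistent with \eqref{alphaone}, rather than by \eqref{alphak} exactly as printed, whose $+a\alpha_{k+1}$ is a sign typo.
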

\begin{proof}
Starting from the definition, for each $n \leq -1$, we calculate
\begin{align*}
&\ol{\del}u_{n} = \sum_{j=0}^{\INF}\ol\del \beta_{j}v_{n-j} + \sum_{j=0}^{\INF}\beta_j \ol\del v_{n-j},\\
&\del u_{n-2} = \sum_{j=0}^{\INF}\del \beta_{j}v_{n-2-j} + \sum_{j=0}^{\INF}\beta_j \del v_{n-2-j}.
\end{align*}
By rearranging the terms we get
\begin{align*}
\ol{\del}u_{n} + \del u_{n-2} +& au_{n-1} = \sum_{j=0}^{\INF}\ol\del \beta_{j}v_{n-j} + \sum_{j=0}^{\INF}\del \beta_{j}v_{n-2-j} \\
&+ \sum_{j=0}^{\INF} \beta_{j} (\ol\del v_{n-j}
 + \del v_{n-2-j}) + \sum_{j=0}^{\INF} a \beta_{j} v_{n-1-j} \\
 &= \ol{\del}\beta_0 v_{n}+ (\ol{\del}\beta_1 +a\beta_0) v_{n-1} \\
 &+ \sum_{j=2}^{\INF}\left(\ol{\del}\beta_j+ \del \beta_{j-2}+ a\beta_{j-1}\right) v_{n-j} =0,
\end{align*}
where in the second equality we used \eqref{DAAnalyticEq_v} and the third equality we used the Lemma \ref{hproperties} (iv).

An analogue calculation using the properties in Lemma \ref{hproperties} (v) shows the converse.
\end{proof}

The range characterization of the attenuated Doppler data $g$ will be given in terms of its zero-th mode $g_0=\oint g(\cdot,\tta)d\tta$ and the negative index modes of
\begin{align}\label{FourierData_a>0}
e^{- h(\zeta,\tta)} g(\zeta,\theta) = \sum_{k=-\INF}^{\INF} \gamma_{k}(\zeta) e^{ik\fii}, \quad \zeta \in \Gam,
\end{align}namely,
\begin{align}\label{g_h}
\bg_h:=\langle \gamma_{-1}, \gamma_{-2}, \gamma_{-3} ... \rangle.
\end{align}
To simplify the statement, from the negative even, respectively, negative odd Fourier modes, we built the sequences
\begin{align}\label{gHEvenOdd}
\bg_h^{even} = \langle \gamma_{-2},\gamma_{-4},...\rangle, \quad \text{and} \quad
\bg_h^{odd} = \langle \gamma_{-1},\gamma_{-3},...\rangle.
\end{align}

Recall the Hilbert transform $\HT$ in \eqref{hilbertT}.
\begin{theorem}[Range characterization of attenuated Doppler Transform]\label{ADopplerT}
Let $a \in C_0^{2,\alpha}(\ol\OM)$ with  $a>0$ in $\OM$, $\alpha>1/2$.

(i) Let $\bF \in C_{0}^{1,\alpha}(\ol\OM;\BR^2)$. Then  $D_{a}\bF\cap C^{\alpha}(\Gam;C^{1,\alpha}(\sph)) \neq \emptyset $, and let $g \in D_{a}\bF\cap C^{\alpha}(\Gam;C^{1,\alpha}(\sph))$ be some Doppler data of $\bF$ in the sense of \eqref{Doppler_definition}. Consider the corresponding sequences $\bg_h^{even}, \bg_h^{odd}$ as in \eqref{gHEvenOdd}.
 Then $\bg_h^{even}, \bg_h^{odd} \in l^{1,1}_{\INF}(\Gamma)\cap C^\alpha(\Gamma;l_1)$ satisfy
\begin{align}\label{ART1TensorCond1}
&[I+i\HT] \bg_h^{even} =0, \quad [I+i\HT] \bg_h^{odd} =0.
\end{align}
Moreover, for each $\zeta\in\Gam$, the zero-th Fourier mode $g_0$ of $g$ satisfy
\begin{align}\label{ART1TensorCond2}
g_{0}(\zeta) = \lim_{\OM\ni z \to \zeta\in \Gam} \frac{-2\re  \del u_{-1}(z)}{a(z)},
\end{align} where
\begin{align*}%\label{u_minus_1}
u_{-1}(z) = \sum_{j=0}^{\INF}\beta_{j}(z)(\B \bg_{h})_{-1-j}(z),\quad z\in\OM,
 \end{align*}
 with $\B$ be the Bukhgeim-Cauchy operator in  \eqref{BukhgeimCauchyFormula}, $\beta_{j}'s$ are the Fourier modes in \eqref{ehEq} and $\bg_h$ is the sequence of the negative Fourier modes of $e^hg$ in  \eqref{g_h}.

(ii) Let $g\in C^{\alpha} \left(\Gam; C^{1,\alpha}(\sph) \right)\cap C^0(\Gam;C^{2,\alpha}(\sph))$ be real valued, so that the corresponding sequences $\bg_h^{even}, \bg_h^{odd}\in Y_{\alpha}$. If $\bg_h^{even}, \bg_h^{odd}$ satisfy \eqref{ART1TensorCond1} and $g_0$ satisfies \eqref{ART1TensorCond2}, then there exists a unique real valued vector field $\bF \in C(\OM;\BR^2)$ such that $g\in D_{a}\bF$ is the attenuated Doppler data of $\bF$.
\end{theorem}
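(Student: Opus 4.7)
\medskip
\noindent For part (i), the plan is to reduce to the non-attenuated case via the integrating factor $e^{-h}$ and then invoke Theorem \ref{NecSuf}. Given a solution $u \in C^{\alpha}(\ol\OM; C^{1,\alpha}(\sph))$ of \eqref{DopplerTransEq} with $u\lvert_{\Gam\times\sph}=g$, set $v := e^{-h}u$. By Lemma \ref{hproperties}(i), $v$ solves $\tta\cdot\nabla v=(\tta\cdot\bF)e^{-h}$, whose Fourier decomposition is \eqref{DAAnalyticEq_v}. Hence $\bv^{even}:=\langle v_{-2},v_{-4},\dots\rangle$ and $\bv^{odd}:=\langle v_{-1},v_{-3},\dots\rangle$ are $\LL$-analytic in $\OM$ with boundary traces $\bg_h^{even}$ and $\bg_h^{odd}$, and the necessity part of Theorem \ref{NecSuf} yields \eqref{ART1TensorCond1}. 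To establish \eqref{ART1TensorCond2}, I exploit the reality of $u$, which forces $u_1=\ol{u_{-1}}$ in $\OM$, to rewrite \eqref{DzeroEq} as $au_0=-2\re\del u_{-1}$ in $\OM$, and then pass to the boundary. The explicit expression for $u_{-1}$ comes from identifying $\bv^{odd}=\B\bg_h^{odd}$ and $\bv^{even}=\B\bg_h^{even}$ (by uniqueness of $\LL$-analytic maps with prescribed trace) and applying the convolution \eqref{UintermsV} of Lemma \ref{intrinsic}.

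\medskip
\noindent For part (ii), the plan is to reverse this construction. First, build $\bv^{even}:=\B\bg_h^{even}$ and $\bv^{odd}:=\B\bg_h^{odd}$; by Theorem \ref{BukhgeimCauchyThm}, Proposition \ref{extraRegularity} (applicable since $\bg_h^{even},\bg_h^{odd}\in Y_\alpha$), and the sufficiency in Theorem \ref{NecSuf} combined with \eqref{ART1TensorCond1}, these are $\LL$-analytic maps with $C^2$ regularity in the interior and the prescribed boundary traces. Interleaving yields an $\LL^2$-analytic $\bv=\langle v_{-1},v_{-2},\dots\rangle$ satisfying the negative-mode half of \eqref{DAAnalyticEq_v}. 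Define negative modes on the attenuated side by $u_{-n}:=\sum_{j=0}^{\INF} \beta_j v_{-n-j}$ for $n\geq 1$; Lemma \ref{intrinsic}(i) ensures they solve \eqref{DAAnalyticEq}. Set $u_0(z):=-2\re\del u_{-1}(z)/a(z)$ for $z\in\OM$ (well defined since $a>0$ there), define positive modes by conjugation $u_n:=\ol{u_{-n}}$ for $n\geq 1$, and set $f_1:=\ol\del u_0+\del u_{-2}+au_{-1}$, $\bF:=\langle 2\re f_1, 2\im f_1\rangle$.

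\medskip
\noindent It then remains to verify that $u(z,\tta):=\sum_n u_n(z)e^{in\fii}$ solves \eqref{DopplerTransEq} and has trace $g$, and to address uniqueness. Equations \eqref{DAAnalyticEq} for negative indices hold by Lemma \ref{intrinsic}, \eqref{Af1Eq} is the definition of $f_1$, and the equations for $n\geq 3$ follow by conjugation (since $a$ is real). The key middle equation \eqref{DzeroEq} reduces, via $u_1=\ol{u_{-1}}$, to $2\re\del u_{-1}=-au_0$, which holds in $\OM$ by the very definition of $u_0$. The trace identities $u_{-n}\lvert_\Gam=g_{-n}$ follow from $v_{-k}\lvert_\Gam=\gamma_{-k}$ and the fact that the $\beta_j$'s are the Fourier modes of $e^h$; positive modes match by conjugation, and $u_0\lvert_\Gam=g_0$ is precisely hypothesis \eqref{ART1TensorCond2}. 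Uniqueness of $\bF$ is inherited from the injectivity of the attenuated Doppler transform cited in the introduction. The step I expect to be most delicate is the continuous extension of $u_0$ to $\Gam$: since $a$ vanishes on the boundary, this requires $\re\del u_{-1}$ to vanish at a matching rate, a property that combines the $C^2$ regularity supplied by Proposition \ref{extraRegularity} with the compatibility encoded in \eqref{ART1TensorCond2}.
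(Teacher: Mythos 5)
Your proposal is correct and follows essentially the same route as the paper: reduction via the integrating factor $e^{-h}$, $\mathcal{L}$-analyticity of the odd/even negative modes of $v$ plus Theorem \ref{NecSuf} for \eqref{ART1TensorCond1}, the identity $au_0=-2\re\del u_{-1}$ from \eqref{DzeroEq} for \eqref{ART1TensorCond2}, and in the converse direction the Bukhgeim--Cauchy construction, the $\beta$-convolution of Lemma \ref{intrinsic}, conjugation for positive modes, and the definition of $u_0$ and $f_1$ exactly as in the paper. Your closing worry about the boundary extension of $u_0$ is resolved precisely as you suspect: the paper simply reads the continuous extension and the trace $u_0|_\Gam=g_0$ off the hypothesis \eqref{ART1TensorCond2}, with the interior $C^1$ regularity of $u_0$ coming from Proposition \ref{extraRegularity} and $a\in C^{2,\alpha}$.
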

\begin{proof}
(i) For the justification of $D_{a}\bF\cap C^{\alpha}(\Gam;C^{1,\alpha}(\sph)) \neq \emptyset $, we refer to the proof of \cite[Theorem 5.1]{sadiqtamasan01} (where $\tta \cdot \bF$ is to replace the zero-tensor $f$).

Now let $g \in D_{a}\bF\cap C^{\alpha}(\Gam;C^{1,\alpha}(\sph))$ be the Doppler data of the given $\bF$.
Then by \cite[Proposition 4.1(i)]{sadiqtamasan01} $\bg_{h}^{even}, \bg_{h}^{odd} \in l^{1,1}_{\INF}(\Gamma)\cap C^\alpha(\Gamma;l_1)$. Moreover, $g$ is the trace on $\Gam \times \sph$ of a solution $u \in C^{1,\alpha}(\OM \times \sph)\cap C^{\alpha}(\ol{\OM}\times \sph)$ of the Transport equation \eqref{DopplerTransEq}.
Then the negative Fourier modes of $v:=e^{h}u$ satisfy \eqref{DAAnalyticEq_v}. In particular its negative odd subsequence $\langle v_{-1},v_{-3},...\rangle$ and negative even subsequence $ \langle v_{-2},v_{-4},...\rangle$ are $\LL$-analytic with traces $\bg_{h}^{odd}$ respectively $\bg_{h}^{even}$. The necessity part of Theorem \ref{NecSuf} yields \eqref{ART1TensorCond1}:
\begin{align*}
[I+i\HT]\bg_{h}^{odd}=0,\quad [I+i\HT]\bg_{h}^{even}=0.
\end{align*}

Since $a>0$ in $\OM$, and the Fourier modes $u_{1}, u_{-1}, u_{0}$ of $u$ solve \eqref{DzeroEq}, we have
\begin{align}\label{U0defn}
u_{0}(z) &= \frac{-2\re  \del u_{-1} (z)  }{a(z)}, \quad z\in \OM.
\end{align} Since the left hand side of \eqref{U0defn} is continuous all the way to the boundary, so is the right hand side. Moreover the limit below exists and
\begin{align*}
g_{0}(z_{0}) = \lim_{z \to z_{0}\in \Gam} u_{0}(z) =  \lim_{ z \to z_{0}\in \Gam} \frac{-2\re  \del u_{-1}(z)}{a(z)}
\end{align*} thus \eqref{ART1TensorCond2} holds.
This proves part (i) of the theorem. We note here that for the necessity part suffices to assume $a \in C_{0}^{1,\alpha}(\ol\OM)$.

To prove the sufficiency we will construct a real valued vector field $\bF$ in $\OM$ and a real valued function $u \in C^{1}(\OM\times \sph)\cap C(\ol\OM \times \sph)$  such that $u \lvert_{\Gam \times \sph}=g$ and $u$ solves \eqref{DopplerTransEq} in $\OM$.

Let $g\in C^{\alpha} \left(\Gam; C^{1,\alpha}(\sph) \right)\cap C^0(\Gam;C^{2,\alpha}(\sph))$, be real valued with the zero mode $g_{0}$ satisfying \eqref{ART1TensorCond2} and the corresponding sequences $\bg_h^{even}, \bg_h^{odd}$ as in \eqref{gHEvenOdd} satisfying \eqref{ART1TensorCond1}.
By \cite[Proposition 4.1(ii)]{sadiqtamasan01} and \cite[Proposition 5.2(iii)]{sadiqtamasan01} $\bg_h^{even}, \bg_h^{odd}\in Y_{\alpha}$.

Use the Bukhgeim-Cauchy Integral formula \eqref{BukhgeimCauchyFormula} to define the $\LL$-analytic maps
\begin{align}
&\bv^{even}(z)= \langle v_{-2}(z), v_{-4}(z), ... \rangle:= \B \bg_h^{even}(z), \quad z\in \OM\\
&\bv^{odd}(z)= \langle v_{-1}(z), v_{-3}(z), ... \rangle:= \B \bg_h^{odd}(z),\quad z\in \OM.
\end{align}By intertwining let also define
\[\bv:=\langle v_{-1}(z), v_{-2}(z),v_{-3}(z), ... \rangle.\]
By Proposition \ref{extraRegularity}
\begin{align}\label{smothness__v_j}
\bv^{even}, \bv^{odd},\bv\in C^{1,\alpha}(\OM; l_{1})\cap C^{\alpha}(\ol\OM;l_1)\cap C^2(\OM;l_\infty).
 \end{align}Moreover, since $\bg_h^{even}, \bg_h^{odd}$ satisfy the hypothesis \eqref{ART1TensorCond1}, by Theorem \ref{NecSuf} we have
\begin{align*}
\bv^{even} \lvert_{\Gam} = \bg_h^{even} \quad \text{and}\quad  \bv^{odd}\lvert_{\Gam} = \bg_h^{odd}.
\end{align*} In particular
\begin{align}\label{vn_intermsof_gn}
v_{n} \lvert_{\Gam} = \sum_{k=0}^{\INF} \left(\alpha_{k}\lvert_\Gam\right) g_{n-k}, \quad n \leq -1.
\end{align}

For each $n \leq -1$, we use the convolution formula below to construct
 \begin{align}\label{DefnUnConv}
 u_{n} := \sum_{j=0}^{\INF} \beta_j v_{n-j}.
 \end{align}

 Since $a\in C^{2,\alpha}_{0}(\ol\OM)$, by \eqref{betasDecay}, the sequence
 $z \mapsto \langle \beta_0(z), \beta_1(z), \beta_2(z),...\rangle$ is in $C^{2,\alpha}(\OM; l_{1}) \cap C^{\alpha}(\ol\OM ; l_1)$. Since convolution preserves $l_1$, the map is in
 \begin{align}\label{regularityofU}
 z\mapsto \langle u_{-1}(z), u_{-2}(z),...\rangle  \in C^{1,\alpha}(\OM; l_{1}) \cap C^{\alpha}(\ol\OM ; l_1).
 \end{align} Moreover, since $\bv\in C^2(\OM;l_\infty)$ as in \eqref{smothness__v_j}, we also conclude from convolution that
 \begin{align}\label{u_minus1}
 z\mapsto \langle  u_{-1}(z), u_{-2}(z),...\rangle  \in C^{2}(\OM; l_\infty).
 \end{align}

 The property \eqref{regularityofU} justifies the calculation of traces $u_n|_\Gam$
for each $n \leq -1$:
\begin{align}\nonumber
u_{n} \lvert_{\Gam} &= \sum_{j=0}^{\INF} \beta_j\left( v_{n-j} \lvert_{\Gam} \right)\\
&=\sum_{j=0}^{\INF} \beta_j \sum_{k=0}^{\INF} \alpha_{k} g_{n-j-k}= \sum_{m=0}^{\INF} \sum_{k=0}^{m}  \alpha_{k} \beta_{m-k} g_{n-m} \nonumber \\ \label{un_intermsof_gn}
&= \alpha_{0}\beta_{0}g_{n} +\sum_{m=1}^{\INF} \sum_{k=0}^{m}  \alpha_{k} \beta_{m-k} g_{n-m} =g_{n},
\end{align} where in the second equality we have used \eqref{vn_intermsof_gn}, in the third equality we  introduce the change of index $m=j+k$,  and in the last equality we used Lemma \ref{hproperties} (vi).

From the Lemma \ref{intrinsic}, the constructed $u_{n}$ in \eqref{DefnUnConv} satisfy
\begin{align}\label{uL2sys_a>0}
\ol{\del} u_{n} + \del u_{n-2} +a u_{n-1} = 0, \quad n\leq-1.
\end{align}

All of the positive Fourier modes are constructed by conjugation:
\begin{align}\label{construct_pos_Fmodes}
u_{n}:=\ol{u_{-n}},\quad n\geq 1.
\end{align}

Since $a>0$ in $\OM$ we can define $u_{0}$ by
\begin{align}\label{ConvU0defn}
u_{0}(z) := \displaystyle -\frac{2\re{\del u_{-1}(z)}}{a(z)},\quad z \in \OM,
\end{align} in particular
\begin{align}\label{u1_au_0}
\ol{\del} u_{1} + \del u_{-1} +au_{0} = 0
\end{align}
 holds.
The hypothesis \eqref{ART1TensorCond2} shows that, as defined, $u_{0}$ extends continuously to the boundary $\Gam$ and $u_{0} \lvert_{\Gam} = g_0$.
Moreover, since $u_{-1}\in C^2(\OM)$ as shown in \eqref{u_minus1} and $a\in C^2(\OM)$ we get $u_0\in C^1(\OM)$.

We next define the real valued vector field $\bF\in C(\OM;\mathbb{R}^2)$ by
\begin{align}\label{definitionF}
\bF = \langle 2 \re {f_1}, 2\im{f_1} \rangle,
\end{align}where
\begin{align}\label{f1_a>0}
f_{1}:=\ol{\del}u_{0}+\del u_{-2}+au_{-1}.
\end{align}

Finally, let $u$ be defined by
\begin{align}\label{definitionU_a>0}
u(z, \tta):=u_{0}(z)+ \sum_{n = 1}^{\INF} u_{-n}(z)e^{-i n\fii} + \sum_{n = 1}^{\INF} u_{n}(z)e^{i n\fii}
\end{align} and check that it has the trace $g$ on $\Gam$ and satisfies the transport equation \eqref{DopplerTransEq} in $\OM$.

 The regularity in \eqref{regularityofU} also allows us to take the trace as follows
\begin{align*}
u(\cdot, \tta)\lvert_{\Gam\times \sph} &=\left.\left ( u_{0}+ \sum_{n = 1}^{\INF} u_{-n}e^{-i n\fii} + \sum_{n = 1}^{\INF} u_{n}e^{i n\fii}\right) \right\lvert_{\Gam}\\
&=u_{0}\lvert_{\Gam}+ \sum_{n = 1}^{\INF} u_{-n}\lvert_{\Gam}e^{-i n\fii} + \sum_{n = 1}^{\INF} u_{n}\lvert_{\Gam}e^{i n\fii}\\
&=g_{0}+ \sum_{n = 1}^{\INF} g_{-n}e^{-i n\fii} + \sum_{n = 1}^{\INF} g_{n}e^{i n\fii} = g(\cdot, \tta),
\end{align*} where in the third equality we have used \eqref{un_intermsof_gn} and \eqref{construct_pos_Fmodes}.
Also, the regularity in \eqref{regularityofU} allows us to differentiate term-wise in order to check that $u$ satisfies \eqref{DopplerTransEq}:
\begin{align*}
\tta \cdot \nabla u &+au = e^{-i \fii} \ol{\del}u_0 + e^{i \fii} \del u_0 + \sum_{n = 1}^{\INF} \ol{\del}u_{-n}e^{-i (n+1) \fii} + \sum_{n = 1}^{\INF} \del u_{-n}e^{-i (n-1) \fii} \\
& \quad+ \sum_{n = 1}^{\INF} \ol{\del}u_{n}e^{i (n-1) \fii} + \sum_{n = 1}^{\INF} \del u_{n}e^{i (n+1) \fii} + \sum_{n = -\INF}^{\INF} a u_{n}e^{i n \fii}\\
&= e^{-i \fii} (\ol{\del}u_0 +\del u_{-2}+au_{-1})  + e^{i \fii} (\del u_0 +\ol{\del}u_2+au_{1}) \\
& \quad+ \ol{\del}u_1+ \del u_{-1}+au_{0} +\sum_{n = 1}^{\INF} (\ol{\del}u_{-n}+ \del u_{-n-2}+au_{-n-1})e^{i (n+1) \fii} \\
& \quad+ \sum_{n = 1}^{\INF} ( \ol{\del}u_{n+2}+\del u_{n}+au_{n+1}) e^{i (n+1) \fii} \\
&= e^{-i \fii} f_1 + e^{i \fii} \ol{f_1}\\
&= \tta \cdot \bF,
\end{align*}where the third equality uses   \eqref{uL2sys_a>0}, \eqref{u1_au_0}, \eqref{f1_a>0}.
\end{proof}

\section{When can the X-ray and Doppler data be mistaken for each other ?} \label{xraydopplermistaken}

By comparing the range conditions for the (non-attenuated) $X$-ray data for 0-tensors in \cite[Theorem 4.1]{sadiqtamasan01} and
for 1-tensors in Theorem \ref{NADopplerT} above, it is transparent that the two data cannot be mistaken for each other, unless they are both zero.
However, in the attenuated case with $a>0$ the situation is different. To distinguish between the data coming from the 0- and 1-tensor fields we
use the notations $g_f$, $g_\bF$  for the attenuated $X$-ray transform of the real valued function $f$, respectively, of the real valued vector field $\bF$.

In the theorem below the attenuated $X$-ray and Doppler transforms are assuming the same attenuation $a$.
\begin{theorem}\label{RadonDopplerConfound}

(i) Let $a \in C^1(\OM)\cap C(\ol\OM)$ be real valued with $a > 0 $ in $\OM$, and $f \in C^1(\OM) \cap C(\ol\OM)$ be real valued with $
\displaystyle f / a  \in C_{0}(\ol \OM)$. Then
$ \bF:=- \displaystyle \nabla \left ( \frac{f}{a} \right )$ is a real valued vector field whose attenuated Doppler data $g_\bF$ is the same as the attenuated $X$-ray data $g_f$ of $f$.

(ii) Let $a\in C^{1,\alpha}_0(\ol\OM)$ be real valued with $a>0$ in $\OM$. Assume that $\bF \in C_{0}^{1,\alpha}(\ol\OM;\mathbb{R}^2)$ is a vector field whose attenuated Doppler data $g_\bF$ equals the attenuated $X$-ray data
of some real valued $f \in C_{0}^{1,\alpha}(\ol\OM)$, $\alpha >1/2$. Then $\bF$ must be a gradient field and
$\bF =  - \displaystyle \nabla \left ( \frac{f}{a} \right )$.
\end{theorem}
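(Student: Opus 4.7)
For part (i) my plan is direct construction. Let $v\in C^1(\OM\times\sph)$ satisfy $\tta\cdot\nabla v + av = f$ with trace $g_f$ on $\Gam\times\sph$; such a $v$ exists because $g_f \in R_a f$. Setting $u := v - f/a$, which is well-defined since $a>0$, and using that $f/a$ does not depend on $\tta$, a direct calculation yields
\[ \tta\cdot\nabla u + au = (\tta\cdot\nabla v + av) - \tta\cdot\nabla(f/a) - f = -\tta\cdot\nabla(f/a) = \tta\cdot\bF, \]
with $\bF := -\nabla(f/a)$. Because $f/a\in C_0(\ol\OM)$ vanishes on $\Gam$, $u\lvert_{\Gam\times\sph} = v\lvert_{\Gam\times\sph} = g_f$, hence $g_f \in D_a\bF$.

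For part (ii) the plan is to compare the two transport solutions whose common boundary trace is $g := g_f = g_\bF$, and exploit uniqueness of $\LL$-analytic maps. Let $v$ (resp.\ $u$) be the unique solution with $\tta\cdot\nabla v + av = f$ (resp.\ $\tta\cdot\nabla u + au = \tta\cdot\bF$) and trace $g$; uniqueness holds since $a>0$. Their difference $w := v - u$ satisfies
\[ \tta\cdot\nabla w + aw = f - \tta\cdot\bF \quad\text{in }\OM,\qquad w\lvert_{\Gam\times\sph}=0. \]
Expand $w(z,\tta) = \sum_n w_n(z)e^{in\fii}$; since $v,u$ are real, $w_{-n}=\ol{w_n}$. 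Using $\tta\cdot\nabla = e^{-i\fii}\ol{\del} + e^{i\fii}\del$ and $\tta\cdot\bF = f_1 e^{-i\fii} + \ol{f_1}e^{i\fii}$ with $f_1:=(F_1+iF_2)/2$, identifying Fourier modes yields
\[ \ol{\del} w_{k+1} + \del w_{k-1} + aw_k = 0,\quad k\notin\{-1,0,1\}, \]
with right-hand sides $-f_1$, $f$, $-\ol{f_1}$ at $k=-1,0,1$ respectively, and boundary conditions $w_n\lvert_\Gam=0$ for every $n$.

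The crux is to show $w_n\equiv 0$ for $n\neq 0$. The negative-index tail $\langle w_{-1}, w_{-2}, w_{-3},\ldots\rangle$ satisfies exactly \eqref{DAAnalyticEq}. By Lemma \ref{intrinsic}(ii), the convolved sequence $v_n := \sum_{j\geq 0}\alpha_j w_{n-j}$, $n\leq -1$, is $\LL$-analytic in $\OM$; since every $w_n\lvert_\Gam=0$, its trace vanishes, and uniqueness of $\LL$-analytic maps with prescribed boundary trace (Theorems \ref{BukhgeimCauchyThm} and \ref{NecSuf}) forces $\bv\equiv 0$. The duality $\sum_m \alpha_m\beta_{k-m}=\delta_{k,0}$ from Lemma \ref{hproperties}(vi) then gives $w_n = \sum_{j\geq 0}\beta_j v_{n-j}\equiv 0$ for $n\leq -1$, which propagates to $n\geq 1$ by reality. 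Substituting $w_n=0$ for $n\neq 0$ into the three exceptional equations collapses them to $aw_0=f$, $\ol{\del} w_0 = -f_1$, and $\del w_0 = -\ol{f_1}$; the first yields $w_0 = f/a$ (well-defined since $a>0$), and separating real and imaginary parts of $2\ol{\del}(f/a) = -(F_1+iF_2)$ produces $\nabla(f/a) = -\bF$, i.e., $\bF = -\nabla(f/a)$. The only technical caveat is convergence of the convolutions and Fourier series in the sequence spaces of Section 2, which follows from the assumed $C^{1,\alpha}$ regularity of $f,\bF,a$ analogously to the proof of Theorem \ref{ADopplerT}.
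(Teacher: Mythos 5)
Your proof is correct and takes essentially the same route as the paper: part (i) is identical, and in part (ii) you use the same integrating-factor/convolution reduction (Lemma \ref{intrinsic}) together with uniqueness of $A$-analytic maps with prescribed boundary trace, merely applied to the difference of the two transport solutions rather than comparing the negative Fourier modes of $u$ and $w$ directly as the paper does, before reading off $aw_0=f$ and $\ol{\del}(f/a)=-f_1$ from the low-order mode equations. One terminological slip: the convolved sequence satisfying $\ol{\del} v_n+\del v_{n-2}=0$ for $n\le -1$ is $\mathcal{L}^2$-analytic, not $\mathcal{L}$-analytic.
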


\begin{proof}

(i) Assume $g_f$ is the $X$-ray data of some real valued function $f$, i.e., it is the trace on $\Gam \times \sph$ of solutions $w$ to the transport problem:
\begin{align}
\tta \cdot \nabla w +aw &= f,\label{transp_function} \\
w \lvert_{\Gam \times \sph} &= g_f. \nonumber
\end{align}

Let $\displaystyle u := w -\frac{f}{a}$ and $\bF :=  - \displaystyle \nabla \left ( \frac{f}{a} \right )$. Then
\begin{align*}
\tta \cdot \nabla u +au &= \tta\cdot\nabla\left(w-\frac{f}{a}\right)+a\left(w-\frac{f}{a}\right)\\
&=\tta\cdot \nabla w + aw-f +\tta\cdot \nabla\left(-\frac{f}{a}\right)\\
&=\tta\cdot\bF,\end{align*}
where the second equality uses \eqref{transp_function} and the definition of $\bF$. Moreover, since $f / a$ vanishes on $\Gam$, we get
\begin{align*}g_\bF=w\lvert_{\Gam \times \sph} = u\lvert_{\Gam\times \sph}+\left.\frac{f}{a}\right\lvert_{\Gam}= u\lvert_{\Gam\times \sph}=g_f.
\end{align*}

(ii) Let $\bF=\langle F_1,F_2\rangle \in C_{0}^{1,\alpha}(\ol\OM)$, $\alpha >1/2$, be a real valued vector field whose attenuated Doppler data $g_\bF$ matches the attenuated
$X$-ray data $g_f$ of some real valued function $f\in C_{0}^{1,\alpha}(\ol\OM)$, i.e. $$g_f=g_\bF=:g\in C^\alpha(\Gam; C^{1,\alpha}(\sph)).$$
Then there exist  $u,w\in C^{1,\alpha}(\OM\times\sph)\cap C^\alpha(\ol\OM\times\sph)$ solutions to the corresponding transport equations
\eqref{DopplerTransEq}, respectively \eqref{AttenRadonTEq} subject to
\begin{align*}
u \lvert_{\Gam \times \sph} = g = w \lvert_{\Gam \times \sph}.
\end{align*}
Then the corresponding sequences of non-positive Fourier modes $\{u_{-n}\}_{n\geq 0}$ of $u$ satisfy
\begin{align}
&\ol{\del}\ol{ u_{-1}} + \del u_{-1} +au_{0}=0\label{repeatU1}\\
&\ol{\del} u_{0} + \del u_{-2} +au_{-1}= \left( F_{1}+i F_{2}\right) /2,\label{repeatU0}\\
&\ol{\del} u_{-n} + \del u_{-n-2}+ au_{-n-1} = 0, \quad n \geq 1,\label{repearU-1}
\end{align}
whereas the non-positive Fourier modes $\{w_{-n}\}_{n\geq 0}$ of $w$ satisfy
\begin{align}
&\ol{\del} \ol{w_{-1}} + \del w_{-1} +a w_0 = f(z),\label{w0}\\
&\ol{\del} w_{0}(z) + \del w_{-2}(z)+a w_{-1} = 0, \label{w-0}\\
&\ol{\del} w_{-n}(z) + \del w_{-n-2}(z)+a w_{-n-1} = 0, \quad n \geq 1.\label{w-1}
\end{align}
Since the boundary data is the same $u|_{\Gam\times\sph}=w|_{\Gam\times\sph}$, we also have
\begin{align}\label{bddIdentity}
u_{-n}|_\Gam=w_{-n}|_\Gam,\quad \forall n\geq 1.
\end{align}

We claim that the systems \eqref{repearU-1} and \eqref{w-1} subject to the identity \eqref{bddIdentity}
yield
\begin{align}\label{u=w}
u_{-n}(z) = w_{-n}(z),\quad z\in\OM, \quad \forall n\geq 1.
\end{align}
Recall the integrating factor $e^{\pm h}$ with $h$ in \eqref{hDefn}. Since $a\in C^{1,\alpha}_0(\ol\OM)$, then
$e^h\in C^{1,\alpha}(\ol\OM\times\sph)$ and its Fourier modes
$\langle \beta_0,\beta_{-1},...\rangle\in C^{1,\alpha}(\OM; l_1)\cap C(\ol\OM; l_1)$ by Lemma \ref{hproperties}. Recall the
function $v:=e^{-h}u$, and introduce $\omega:=e^{-h}w$. Then their corresponding negative Fourier modes
$\langle v_{-1},v_{-2},....\rangle$ and $\langle \omega_{-1},\omega_{-2},....\rangle$
satisfy
\begin{align}\label{un=wn}
u_{-n}=\sum_{j=0}^{\INF}\beta_{j} v_{-n-j},  \quad \omega_{-n}=\sum_{j=0}^{\INF}\beta_{j} w_{-n-j}, \quad n \geq 1,
\end{align} as in Lemma \ref{intrinsic}. Moreover, $\langle v_{-1},v_{-2},....\rangle$ and $\langle \omega_{-1},\omega_{-2},....\rangle$ are $\LL^2$-analytic and coincide on $\Gam$.
By the uniqueness of $\LL^2$-analytic functions with a given trace,
they coincide inside:
\begin{align}\label{vn=wn}
v_{-n}(z)=\omega_{-n}(z), \quad z\in\OM, \quad \forall n\geq 1.
\end{align}
Using \eqref{un=wn} and \eqref{vn=wn}, we conclude for all $n \geq 1$ that
\begin{align*}
u_{-n}(z) = \sum_{j=0}^{\INF}\beta_{j}(z) v_{-n-j}(z) = \sum_{j=0}^{\INF}\beta_{j}(z) \omega_{-n-j}(z) = w_{-n}(z), \quad z \in \OM.
\end{align*}Thus \eqref{u=w} holds.

By subtracting \eqref{w-0} from \eqref{repeatU0} and using \eqref{u=w} we obtain $$\ol\del(u_0-w_0)=(F_1+iF_2)/2.$$ Since both $u_0$ and $w_0$
are real valued we see that
\begin{align}
\langle F_1, F_2\rangle=\nabla(u_0-w_0).\label{F=grad}
\end{align}

Moreover, by equation \eqref{w0},
\begin{align*}
f&=\ol\del\ol{w_{-1}}+\del w_{-1}+aw_0\\
&= \ol\del \ol {u_{-1}} + \del u_{-1}+aw_0 \\
&= \ol\del \ol {u_{-1}} + \del u_{-1}+au_0 +a(w_0-u_0)\\
&=a(w_0-u_0),
\end{align*}where the second equality uses \eqref{u=w} and the third equality uses \eqref{repeatU1}.
Therefore $u_0-w_0=-{f}/{a},$ and by  \eqref{F=grad}, $\bF = \displaystyle - \nabla \left ( \frac{f}{a} \right ).$
\end{proof}
Given the Helmholz decomposition of a vector field in gradient and a solenoidal field part, Theorem \ref{RadonDopplerConfound}(ii) yields the following.
\begin{corollary}
For a given attenuation, the attenuated Doppler data of a solenoidal compactly supported smooth real valued vector field cannot be mistaken by the attenuated $X$-ray data of a compactly supported smooth real valued function.
\end{corollary}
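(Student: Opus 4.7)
The plan is to deduce the corollary directly from Theorem~\ref{RadonDopplerConfound}(ii), coupled with the solenoidal hypothesis and a unique continuation argument for harmonic functions. Given a smooth compactly supported solenoidal real valued vector field $\bF$ in $\Omega$ together with a smooth compactly supported real valued function $f$ whose attenuated $X$-ray data $g_f$ coincides with the attenuated Doppler data $g_\bF$ for the same attenuation $a$, Theorem~\ref{RadonDopplerConfound}(ii) furnishes the representation $\bF = -\nabla(f/a)$ on $\Omega$. Since $a>0$ in $\Omega$ and $f$ is smooth with compact support in $\Omega$, the quotient $f/a$ is well-defined and smooth on $\Omega$, and in fact vanishes identically in a neighborhood of $\partial\Omega$ (where $f$ itself vanishes).

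I would then exploit the solenoidal hypothesis $\operatorname{div}\bF = 0$. Taking the divergence of the representation above yields
\begin{equation*}
\Delta(f/a) = -\operatorname{div}\bF = 0 \quad \text{in } \Omega,
\end{equation*}
so $f/a$ is harmonic on the connected open set $\Omega$. Because $f$ is compactly supported in $\Omega$, $f/a \equiv 0$ on a non-empty open subset of $\Omega$ (a collar neighborhood of $\partial\Omega$). By the real analyticity of harmonic functions, or equivalently unique continuation, this forces $f/a \equiv 0$ throughout $\Omega$. Since $a>0$ in $\Omega$ we conclude $f \equiv 0$ and hence $\bF = -\nabla 0 = \bO$. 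Thus the only situation in which the attenuated Doppler data of a solenoidal $\bF$ matches the attenuated $X$-ray data of some $f$ is the trivial one $f \equiv 0$, $\bF \equiv \bO$; the two kinds of data cannot be non-trivially mistaken for each other.

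There is no genuine obstacle to overcome — the argument is essentially a one-line consequence of part~(ii) of the preceding theorem together with harmonic unique continuation. The only subtlety worth highlighting is the well-definedness and regularity of the quotient $f/a$, which is guaranteed precisely by the positivity of $a$ in $\Omega$ and the compact support of $f$ (so that the problematic boundary region where $a$ vanishes lies harmlessly outside $\operatorname{supp} f$).
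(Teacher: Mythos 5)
Your proposal is correct and follows essentially the same route as the paper: the paper simply invokes Theorem~\ref{RadonDopplerConfound}(ii) together with the (uniqueness in the) Helmholtz decomposition, and your argument just makes that uniqueness explicit by noting that $f/a$ is harmonic, vanishes near $\partial\OM$, and hence vanishes identically. (One could equally well conclude via the maximum principle, since $f/a\in C_0(\ol\OM)$ is harmonic with zero boundary values, but your unique-continuation step is perfectly valid.)
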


\section*{Acknowledgments} The work of both authors has been supported by the NSF Grant DMS-1312883.

\end{document}